\documentclass[10pt, article]{amsart}

\usepackage{lmodern} 
\usepackage[T1]{fontenc}
\usepackage[utf8]{inputenc}
\usepackage{amsmath, amssymb, amsfonts, amscd, verbatim}
\usepackage{tabularx}
\usepackage[normalem]{ulem}
\usepackage{hyperref}
\usepackage{indentfirst}
\usepackage{latexsym}
\usepackage{faktor}
\usepackage{xfrac} 
\usepackage{xcolor}
\usepackage{bm}
\usepackage{tikz}

\theoremstyle{plain}
\newtheorem{Proposition}{Proposition}[section]
\newtheorem{Theorem}[Proposition]{Theorem}
\newtheorem{Corollary}[Proposition]{Corollary}
\newtheorem{Lemma}[Proposition]{Lemma}
\theoremstyle{definition}
\newtheorem{Definition}[Proposition]{Definition}
\newtheorem{Remark}[Proposition]{Remark}
\newtheorem{Example}[Proposition]{Example}

\numberwithin{equation}{section}
\title[An Algebraic Viewpoint on Linear Differential Equations]
{An Algebraic Viewpoint on Linear Differential Equations}

\author{Hussain Al-Rasheed}
\address{Jubail Industrial College, Al Jubail, Eastern Province, Saudi Arabia}
\email{rashedhs@rcjy.edu.sa}

\date{\today}
\keywords{Linear differential operators, Principal ideal domains, Method of undetermined coefficients, Euler equation, Green's functions, Integral transforms, Partial differential equations}
\subjclass[2020]{Primary 34A30; Secondary 97I40, 35E05}

\begin{document}

\begin{abstract}
Classical methods for solving linear ordinary differential equations, such as superposition, the method of undetermined coefficients, and the annihilator technique, are often presented as heuristic, procedural rules. In this article, we show that these methods admit a coherent algebraic interpretation when constant-coefficient linear differential operators are viewed as elements of the polynomial ring $\mathbb{C}[D]$, acting on spaces of smooth functions. 

Without invoking the formalism of $D$-modules or non-commutative operator algebras, we explain how homogeneous solution spaces arise as kernels of linear operators, how particular solutions form affine cosets, and how the search for solutions is an infinite-dimensional eigenvalue problem. Furthermore, we extend this algebraic framework to variable-coefficient equations, resolving the Euler equation through ring isomorphisms and framing d'Alembert's reduction of order as non-commutative operator factorization. We also explore the boundary of this linear theory, demonstrating how diffeomorphic linearization allows certain non-linear equations---such as those of Bernoulli and Riccati---to be mapped directly into the $\mathbb{C}[D]$-module framework. Finally, we contrast this framework with the multivariable ring $\mathbb{C}[D_x, D_y]$, using the loss of the principal ideal domain property to explain the intrinsic structural divergence of partial differential equations, and indicate further universal extensions to discrete difference equations and the Weyl algebra.
\end{abstract}

\maketitle

\tableofcontents

\section{Introduction}

Linear ordinary differential equations with constant coefficients occupy a standard place in the undergraduate mathematics, engineering, and applied science curricula \cite{BoyceDiPrima}. Despite their foundational importance, the standard presentation of solution techniques often emphasizes procedural mechanics over structural understanding \cite{Rota1998}. Students are instructed to ``guess'' particular solutions, to multiply by the independent variable in cases of resonance, and to memorize substitutions for variable-coefficient equations, often without an explanation of why these steps are mathematically necessary.

The objective of this article is to present an algebraic viewpoint that renders these classical methods transparent. The central observation is that a constant-coefficient linear differential operator
\[
L = a_n D^n + a_{n-1} D^{n-1} + \cdots + a_0,
\qquad D = \frac{d}{dx},
\]
may be regarded as an element of the polynomial ring $\mathbb{C}[D]$, acting linearly on suitable spaces of smooth functions \cite{BirkhoffRota}. From this perspective, solving the differential equation $L[y] = f$ amounts to solving a linear equation in an infinite-dimensional vector space. The homogeneous solutions form the kernel of the operator, and the general solution forms an affine translate of that kernel. Furthermore, solving this equation is an infinite-dimensional eigenvalue problem, where finding the kernel equates to finding the eigenfunctions of the operator associated with the zero eigenvalue.

\section{The Algebraic Setup}

To establish this framework, we define the foundational objects of study. The central structure is the polynomial ring $\mathbb{C}[D]$, where $D = \frac{d}{dx}$. Elements of $\mathbb{C}[D]$ are polynomial expressions in $D$ with complex coefficients and are interpreted as linear differential operators. Because the complex numbers $\mathbb{C}$ form a field, $\mathbb{C}[D]$ is a \emph{principal ideal domain} (PID). This critical structural property guarantees that every ideal is generated by a single polynomial, ensuring that concepts such as the greatest common divisor and coprime factorizations are well-defined for differential operators \cite{BirkhoffRota}.

We consider the space $C^\infty(\mathbb{R})$, the set of all infinitely differentiable complex-valued functions on $\mathbb{R}$. This space acts naturally as a $\mathbb{C}[D]$-module via the operation
\[
p(D)\cdot f := p\!\left(\tfrac{d}{dx}\right)f.
\]
In this context, linear differential operators are module endomorphisms. 

For any $f\in C^\infty(\mathbb{R})$, the annihilator $\operatorname{Ann}_{\mathbb{C}[D]}(f)$ consists of all constant-coefficient operators that annihilate $f$. Thus, finding the solution space to a homogeneous differential equation $p(D)y = 0$ is the process of identifying the torsion elements of the module that are annihilated by $p(D)$.

\section{Homogeneous Equations and the Zero-Eigenspace}

Let $p(D)\in\mathbb{C}[D]$ be a monic polynomial (without loss of generality, as any linear operator may be divided by its non-zero leading coefficient without altering its kernel). Consider the linear differential equation
\[
p(D)y=0.
\]
Since $p(D)$ acts as a $\mathbb{C}$-linear endomorphism of $C^\infty(\mathbb{R})$, the space of all solutions is the kernel of this operator:
\[
\mathcal{S}(p)=\ker\bigl(p(D):C^\infty(\mathbb{R})\to C^\infty(\mathbb{R})\bigr).
\]
From an operator-theoretic standpoint, this kernel is the \emph{eigenspace of the operator $p(D)$ associated with the eigenvalue zero}. Therefore, solving the homogeneous equation is equivalent to identifying an eigenfunction basis for this zero-eigenspace. 

To understand how this basis is constructed, we recall that an \emph{eigenfunction} of a differential operator $L$ is a non-zero smooth function $f(x)$ satisfying $L(f(x)) = \lambda f(x)$, or equivalently, $(L - \lambda I)f = 0$. For the fundamental derivative operator $D = \frac{d}{dx}$, we determine its standard eigenfunctions by solving the eigenvalue equation:
\[
(D - \lambda)f(x) = 0.
\]
This is a separable first-order differential equation and, up to multiplication by a scalar constant, the unique solution $f(x)$ is the exponential function $e^{\lambda x}$. 

Because polynomial multiplication is commutative, any polynomial operator $p(D) \in \mathbb{C}[D]$ preserves these eigenfunctions. Applying $p(D)$ to $e^{\lambda x}$ yields:
\[
p(D)e^{\lambda x} = p(\lambda)e^{\lambda x}.
\]
Consequently, the exponential $e^{\lambda x}$ belongs to the zero-eigenspace $\ker p(D)$ if and only if $p(\lambda) = 0$. However, when the algebraic roots of the characteristic polynomial are repeated, the operator is defective and cannot be diagonalized into independent pure exponential eigenfunctions. To complete the basis, we are forced to construct \emph{generalized eigenfunctions}. 

A generalized eigenfunction of rank $m$ corresponding to the eigenvalue $\lambda$ is a non-zero function $f(x)$ that is annihilated by the $m$-th power of the shifted operator, meaning it satisfies $(D - \lambda)^m f = 0$, while $(D - \lambda)^{m-1} f \neq 0$. We accomplish this explicit basis construction in three progressive steps.

\subsection*{Step 1: The Base Operator $p(D) = D^m$}
We begin by examining the foundational case where $\lambda = 0$ is a root of multiplicity $m$. The generalized eigenfunctions must satisfy $D^m f(x) = 0$.

\begin{Lemma}
For $m\in\mathbb{N}$, the set $\{1,x,x^2,\dots,x^{m-1}\}$ constitutes a basis for $\ker D^m$, yielding:
\[
\ker D^m = \operatorname{span}_{\mathbb{C}}\{1,x,x^2,\dots,x^{m-1}\}.
\]
\end{Lemma}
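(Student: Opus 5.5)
We must prove two things: every monomial $x^k$ with $0\le k\le m-1$ lies in $\ker D^m$, and these monomials span the kernel and are linearly independent. The plan is a direct argument by induction on $m$, using only the fundamental theorem of calculus. We regard $D^m$ as a $\mathbb{C}$-linear endomorphism of $C^\infty(\mathbb{R})$ as set up in Section 2.

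\emph{Containment.} For $0\le k\le m-1$, repeated differentiation gives $D^k x^k = k!$, a constant. Hence $D^{k+1}x^k=0$, and therefore $D^m x^k = D^{m-1-k}D^{k+1}x^k = 0$. So $\operatorname{span}_{\mathbb{C}}\{1,x,\dots,x^{m-1}\}\subseteq \ker D^m$.

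\emph{Linear independence.} Suppose $\sum_{k=0}^{m-1} c_k x^k = 0$ identically on $\mathbb{R}$. Apply $D^j$ and evaluate at $x=0$. Every term except $j!\,c_j$ vanishes, so $c_j=0$ for each $j$. This avoids any appeal to counting roots of polynomials.

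\emph{Spanning, by induction on $m$.} This is the only step with real content.
\begin{itemize}
\item For $m=1$: if $f'=0$ on $\mathbb{R}$, then $f$ is constant. Apply the mean value theorem separately to the real and imaginary parts of $f$, since $f$ is complex-valued.
\item Inductive step: suppose $D^{m}f=0$ with $m\ge 2$. Then $D^{m-1}(Df)=0$, so by the inductive hypothesis $Df=\sum_{k=0}^{m-2} c_k x^k$ for some $c_k\in\mathbb{C}$.
\item Set $g(x)=f(x)-\sum_{k=0}^{m-2}\frac{c_k}{k+1}x^{k+1}$. Then $Dg=0$, so by the base case $g$ is a constant $c$.
\item Hence $f = c+\sum_{k=1}^{m-1}\frac{c_{k-1}}{k}x^k$ lies in the span, which completes the induction.
\end{itemize}

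The main obstacle is the base case $\ker D=\{\text{constants}\}$, which is where the analysis enters. Here the domain being the connected interval $\mathbb{R}$ is essential: on a disconnected domain, $\ker D$ would contain locally constant functions that are not constant. Everything else is formal algebra in $\mathbb{C}[D]$. Combining the three parts, $\{1,x,\dots,x^{m-1}\}$ is a basis of $\ker D^m$, which therefore has dimension $m$.
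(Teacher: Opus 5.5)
Your proof is correct and takes the same approach as the paper: your induction on $m$, which peels off one antiderivative at a time starting from $\ker D=\{\text{constants}\}$, is a rigorous version of the paper's ``by repeated integration'' argument, and your containment step is the paper's converse. You also verify linear independence of $1,x,\dots,x^{m-1}$, which the word ``basis'' requires and the paper leaves implicit.
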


\begin{proof}
A function $f\in C^\infty(\mathbb{R})$ satisfies $D^m f=0$ if and only if its $m$-th derivative vanishes. By repeated integration, such a function must be a polynomial of degree at most $m-1$. Conversely, any polynomial of degree at most $m-1$ is annihilated by $D^m$.
\end{proof}

\subsection*{Step 2: The Shifted Operator $p(D) = (D-\lambda)^m$}
To extend our basis to an arbitrary root $\lambda$ of multiplicity $m$, we introduce an automorphism. For $\lambda\in\mathbb{C}$, define conjugation by exponentials as:
\[
T_\lambda : C^\infty(\mathbb{R}) \to C^\infty(\mathbb{R}), \qquad
(T_\lambda f)(x)=e^{\lambda x}f(x).
\]
This map is a $\mathbb{C}$-linear automorphism of $C^\infty(\mathbb{R})$, with inverse $T_{-\lambda}$. A direct computation establishes that $T_{-\lambda}\circ (D-\lambda)\circ T_\lambda = D$.

\begin{Lemma}
Let $m\in\mathbb{N}$ and $\lambda\in\mathbb{C}$. A function $f\in C^\infty(\mathbb{R})$ satisfies
\[
f\in \ker (D-\lambda)^m
\]
if and only if
\[
T_{-\lambda}f \in \ker D^m.
\]
\end{Lemma}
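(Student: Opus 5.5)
The plan is to use the conjugation identity $T_{-\lambda}\circ (D-\lambda)\circ T_\lambda = D$ stated just before the lemma, upgrade it to the $m$-th power, and then use that $T_{\pm\lambda}$ are mutually inverse bijections. The only genuine computation is the base identity itself. Although the paper calls it a direct computation, I would record it explicitly because everything rests on it. For $g\in C^\infty(\mathbb{R})$, the product rule gives
\[
(D-\lambda)(e^{\lambda x}g) = \lambda e^{\lambda x}g + e^{\lambda x}g' - \lambda e^{\lambda x} g = e^{\lambda x} g' ,
\]
and multiplying by $e^{-\lambda x}$ yields $T_{-\lambda}(D-\lambda)T_\lambda g = Dg$.

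Next I will raise this to the $m$-th power. Since $T_\lambda T_{-\lambda}=\mathrm{id}$, the inner factors cancel telescopically:
\[
T_{-\lambda}(D-\lambda)^m T_\lambda = \bigl(T_{-\lambda}(D-\lambda)T_\lambda\bigr)^m = D^m .
\]
This can be phrased as a short induction on $m$ if one wants to be formal. Composing on the left with $T_\lambda$ gives the equivalent form
\[
(D-\lambda)^m T_\lambda = T_\lambda D^m .
\]

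To finish, take $f\in C^\infty(\mathbb{R})$ and set $g=T_{-\lambda}f$, so that $f=T_\lambda g$. Then
\[
(D-\lambda)^m f = (D-\lambda)^m T_\lambda g = T_\lambda D^m g .
\]
Because $T_\lambda$ is injective (multiplication by the nowhere-vanishing function $e^{\lambda x}$), the left side vanishes exactly when $D^m g=0$. That is, $f\in\ker(D-\lambda)^m$ if and only if $T_{-\lambda}f\in\ker D^m$, which is the claim.

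There is no real obstacle here. The only points needing care are that $T_{\pm\lambda}$ preserve smoothness, so everything stays in $C^\infty(\mathbb{R})$, and that injectivity of $T_\lambda$ is what turns the operator identity into the equivalence of kernel memberships.
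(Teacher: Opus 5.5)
Your proposal is correct and follows essentially the same route as the paper: conjugate by $T_{\lambda}$ to obtain $(D-\lambda)^m = T_\lambda \circ D^m \circ T_{-\lambda}$, then use injectivity of $T_\lambda$ to compare kernels. You also write out the product-rule computation and the telescoping step, which the paper leaves implicit.
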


\begin{proof}
Using the identity $(D-\lambda)^m = T_\lambda \circ D^m \circ T_{-\lambda}$, we observe that
\[
(D-\lambda)^m f=0
\quad \Longleftrightarrow \quad
D^m(T_{-\lambda}f)=0.
\]
The result follows immediately.
\end{proof}

\begin{Corollary}
For $m\in\mathbb{N}$ and $\lambda\in\mathbb{C}$, the set $\{e^{\lambda x}, xe^{\lambda x}, \dots, x^{m-1}e^{\lambda x}\}$ constitutes a basis for $\ker (D-\lambda)^m$, yielding:
\[
\ker (D-\lambda)^m
=
\operatorname{span}_{\mathbb{C}}
\{e^{\lambda x}, xe^{\lambda x}, \dots, x^{m-1}e^{\lambda x}\}.
\]
\end{Corollary}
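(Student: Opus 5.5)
The plan is to transport the basis of $\ker D^m$ from the first lemma through the automorphism $T_\lambda$, using the second lemma to identify the two kernels.

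First, I will reformulate the preceding lemma as an equality of subspaces,
\[
\ker (D-\lambda)^m = T_\lambda\bigl(\ker D^m\bigr).
\]
The inclusion ``$\subseteq$'' holds because if $f\in\ker(D-\lambda)^m$, then $g:=T_{-\lambda}f\in\ker D^m$ and $f=T_\lambda g$, since $T_\lambda$ and $T_{-\lambda}$ are mutually inverse. For ``$\supseteq$'', take $g\in\ker D^m$ and set $f=T_\lambda g$. Then $T_{-\lambda}f=g\in\ker D^m$, so the lemma gives $f\in\ker(D-\lambda)^m$.

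Second, I will use that $T_\lambda$ is a $\mathbb{C}$-linear bijection of $C^\infty(\mathbb{R})$. Its restriction to $\ker D^m$ is therefore a linear isomorphism onto $\ker(D-\lambda)^m$, and a linear isomorphism carries a basis to a basis. By the first lemma, $\{1,x,\dots,x^{m-1}\}$ is a basis of $\ker D^m$. Its image is $\{T_\lambda(x^k)\}_{k=0}^{m-1}=\{x^k e^{\lambda x}\}_{k=0}^{m-1}$, which is then a basis of $\ker(D-\lambda)^m$. This gives the stated span identity.

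No step here is a serious obstacle. The only point needing care is that linear independence is preserved, and this follows from injectivity of $T_\lambda$: if $\sum c_k x^k e^{\lambda x}=0$, multiplying by the nowhere-vanishing $e^{-\lambda x}$ gives $\sum c_k x^k=0$ as a function on $\mathbb{R}$. A polynomial vanishing on all of $\mathbb{R}$ has all coefficients zero, so every $c_k=0$. If the first lemma is read as giving only a spanning set, I would include this short independence check explicitly.
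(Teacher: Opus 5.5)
Your proposal is correct and is essentially the argument the paper intends: the corollary is stated without a separate proof, as a direct consequence of the two preceding lemmas, namely transporting the basis $\{1,x,\dots,x^{m-1}\}$ of $\ker D^m$ through the automorphism $T_\lambda$ once $\ker(D-\lambda)^m = T_\lambda(\ker D^m)$ is known. Your explicit linear-independence check, multiplying by $e^{-\lambda x}$ and using that a polynomial vanishing on $\mathbb{R}$ is zero, fills in a detail the paper leaves implicit.
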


This corollary provides the operator-theoretic formulation for resonance. The polynomial multiples $x^k e^{\lambda x}$ serve as the infinite-dimensional analogues of generalized eigenvectors completing a Jordan chain \cite{BirkhoffRota}.

\subsection*{Step 3: The General Polynomial Operator $p(D)$}
Finally, we assemble the complete basis for an arbitrary operator $p(D)$ by exploiting the algebraic structure of the PID $\mathbb{C}[D]$.

\begin{Theorem}
Let $\lambda_1 \neq \lambda_2$ and $m_1,m_2\in\mathbb{N}$. Then, as vector spaces,
\[
\ker\!\bigl((D-\lambda_1)^{m_1}(D-\lambda_2)^{m_2}\bigr)
=
\ker (D-\lambda_1)^{m_1}
\;\oplus\;
\ker (D-\lambda_2)^{m_2}.
\]
\end{Theorem}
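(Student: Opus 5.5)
The natural approach is the PID / Bézout argument in $\mathbb{C}[D]$. Write $P_1=(D-\lambda_1)^{m_1}$ and $P_2=(D-\lambda_2)^{m_2}$, and $P=P_1P_2$. Since $\lambda_1\neq\lambda_2$, the polynomials $P_1$ and $P_2$ have no common root, so they are coprime in $\mathbb{C}[D]$. Because $\mathbb{C}[D]$ is a PID, the ideal $(P_1,P_2)$ is generated by $\gcd(P_1,P_2)=1$. Hence there exist $A,B\in\mathbb{C}[D]$ with
\[
A(D)P_1(D)+B(D)P_2(D)=1 .
\]
Reading this as an identity of operators on $C^\infty(\mathbb{R})$ is legitimate because $p\mapsto p(D)$ is a ring homomorphism into the endomorphisms, which is exactly the module structure set up in Section~2. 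So every smooth $f$ satisfies $f=AP_1f+BP_2f$.

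\textbf{Step 1: both summands lie in $\ker P$.} Commutativity of $\mathbb{C}[D]$ gives $P=P_2P_1$. So if $P_1f=0$ then $Pf=P_2P_1f=0$, and similarly for $P_2$. Thus the sum of the two kernels sits inside $\ker P$.

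\textbf{Step 2: the sum is direct.} Suppose $f\in\ker P_1\cap\ker P_2$. Then $f=AP_1f+BP_2f=0$, so the intersection is trivial.

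\textbf{Step 3: the kernels span $\ker P$.} Take $f\in\ker P$ and set $f_1=BP_2f$ and $f_2=AP_1f$, so that $f=f_1+f_2$. Then
\[
P_1f_1=BP_1P_2f=BPf=0,
\]
again by commutativity, and likewise $P_2f_2=AP_2P_1f=0$. Hence $f_1\in\ker P_1$ and $f_2\in\ker P_2$.

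The only delicate point is the justification that coprimality yields the Bézout identity. One can cite the PID property, or exhibit it directly: $(x-\lambda_1)^{m_1}$ and $(x-\lambda_2)^{m_2}$ share no root, and the Euclidean algorithm in $\mathbb{C}[x]$ produces $A$ and $B$. Everything else is formal manipulation using commutativity. Combined with the Corollary, this shows in addition that $\ker P$ has dimension $m_1+m_2$, with basis $\{x^ke^{\lambda_1x}\}_{k<m_1}\cup\{x^ke^{\lambda_2x}\}_{k<m_2}$.
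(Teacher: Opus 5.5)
Your proof is correct and follows essentially the same route as the paper: Bézout's identity $AP_1+BP_2=1$ in the PID $\mathbb{C}[D]$, applied to $f\in\ker P$, splits $f$ into components lying in the two kernels. The one small difference is that you deduce $\ker P_1\cap\ker P_2=\{0\}$ from that same identity, whereas the paper appeals to the linear independence of distinct exponential-polynomial types without proving it there; your version is therefore more self-contained, and it also checks the inclusion $\ker P_1+\ker P_2\subseteq\ker P$ explicitly.
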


\begin{proof}
Since $(D-\lambda_1)^{m_1}$ and $(D-\lambda_2)^{m_2}$ are coprime elements of the PID $\mathbb{C}[D]$, there exist polynomials $a(D),b(D)\in\mathbb{C}[D]$ such that
\[
a(D)(D-\lambda_1)^{m_1}+b(D)(D-\lambda_2)^{m_2}=1.
\]
Applying this identity to any $f\in \ker\bigl((D-\lambda_1)^{m_1}(D-\lambda_2)^{m_2}\bigr)$ yields a direct sum decomposition of $f$ into components lying in the respective kernels. The intersection of these kernels is trivial, as distinct exponential-polynomial types are linearly independent.
\end{proof}

\begin{Corollary}
Let
\[
p(D)=(D-\lambda_1)^{m_1}(D-\lambda_2)^{m_2}\cdots(D-\lambda_r)^{m_r},
\]
where $\lambda_1,\dots,\lambda_r\in\mathbb{C}$ are pairwise distinct roots. Then the total zero-eigenspace is the direct sum of the generalized eigenspaces:
\[
\ker p(D)
=
\bigoplus_{j=1}^r \ker (D-\lambda_j)^{m_j}.
\]
In particular, the set $\{x^k e^{\lambda_j x} : 0\le k\le m_j-1,\; 1\le j\le r\}$ constitutes an explicit basis for $\ker p(D)$, yielding:
\[
\ker p(D) = \operatorname{span}_{\mathbb{C}}\{x^k e^{\lambda_j x} : 0\le k\le m_j-1,\; 1\le j\le r\}.
\]
\end{Corollary}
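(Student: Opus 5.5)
The plan is to prove the Corollary by induction on the number $r$ of distinct roots, using the preceding Theorem as the engine and then reading off the basis from the earlier Corollary on $\ker(D-\lambda)^m$.

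For $r=1$ there is nothing to prove beyond that earlier Corollary. For the inductive step, write $p(D)=q(D)\,(D-\lambda_r)^{m_r}$ with $q(D)=\prod_{j<r}(D-\lambda_j)^{m_j}$. The Theorem as stated only treats two prime-power factors, but its proof uses nothing beyond coprimality in the PID $\mathbb{C}[D]$. Since $\lambda_r\neq\lambda_j$ for $j<r$, the polynomials $q(D)$ and $(D-\lambda_r)^{m_r}$ share no root and are therefore coprime. I would rerun that Bezout argument verbatim to obtain
\[
\ker p(D)=\ker q(D)\oplus\ker(D-\lambda_r)^{m_r}.
\]
Concretely, choose $a,b$ with $a(D)q(D)+b(D)(D-\lambda_r)^{m_r}=1$ and write $f=a(D)q(D)f+b(D)(D-\lambda_r)^{m_r}f$. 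Commutativity of $\mathbb{C}[D]$ shows that the first summand lies in $\ker(D-\lambda_r)^{m_r}$ and the second in $\ker q(D)$.

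The step needing care is the triviality of the intersection, which I would handle algebraically rather than by appeal to "distinct exponential types." If $g\in\ker q(D)\cap\ker(D-\lambda_r)^{m_r}$, then
\[
g=a(D)q(D)g+b(D)(D-\lambda_r)^{m_r}g=0.
\]
This gives the direct sum cleanly. Applying the inductive hypothesis to $q(D)$ then yields $\ker p(D)=\bigoplus_{j=1}^r\ker(D-\lambda_j)^{m_j}$.

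For the basis statement, the earlier Corollary gives the basis $\{x^ke^{\lambda_jx}:0\le k\le m_j-1\}$ of each summand. In a direct sum of subspaces, the union of bases of the summands is a basis of the sum, so the displayed set spans $\ker p(D)$ and is linearly independent. As a byproduct, $\dim\ker p(D)=\sum m_j=\deg p$.

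The only real obstacle is that the Theorem is stated for two prime powers rather than for an arbitrary coprime pair. I expect to handle this by noting that its proof generalizes without change, rather than by iterating the Theorem directly on prime powers.
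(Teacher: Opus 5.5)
Your proof is correct and follows essentially the paper's route: the paper gives no separate proof of this Corollary, treating it as the iteration of the B\'ezout splitting in the two-factor Theorem combined with the basis of $\ker(D-\lambda)^m$ from the preceding Corollary. Your two refinements are worthwhile. First, the Theorem as written covers only two prime powers, so the iteration genuinely needs your version for a coprime pair $q(D)$, $(D-\lambda_r)^{m_r}$. Second, deriving $\ker q(D)\cap\ker(D-\lambda_r)^{m_r}=\{0\}$ directly from $a(D)q(D)+b(D)(D-\lambda_r)^{m_r}=1$ replaces the paper's appeal to the linear independence of distinct exponential-polynomial types, which is close to the very statement being proved.
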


\begin{Remark}
An immediate consequence of this explicit basis construction is that for any non-zero linear differential operator $p(D) \in \mathbb{C}[D]$, the dimension of its zero-eigenspace is equal to its polynomial degree:
\[
\dim \ker p(D) = \deg p(D).
\]
\end{Remark}

\subsection{The Torsion Submodule}
The construction of these generalized eigenspaces reveals a fundamental algebraic classification of the function space itself. In abstract algebra, an element $m$ of a module over a ring is designated a \emph{torsion element} if there exists a non-zero element $r$ in the ring such that $rm = 0$. 

Because every solution to a homogeneous constant-coefficient linear differential equation $p(D)y = 0$ is annihilated by the polynomial operator $p(D)$, every solution is a torsion element of the module $C^\infty(\mathbb{R})$. 

We define the torsion submodule, $M_{\text{tors}}$, as the set of all such functions:
\[
M_{\text{tors}} = \left\{ f \in C^\infty(\mathbb{R}) \mid \exists\, p(D) \in \mathbb{C}[D] \setminus \{0\} \text{ such that } p(D)f = 0 \right\}.
\]
By Corollary 3.5, we have identified the composition of this submodule. The torsion submodule of $C^\infty(\mathbb{R})$ is the $\mathbb{C}$-vector space possessing the basis of all functions of exponential-polynomial type (frequently termed \emph{quasipolynomials}):
\[
M_{\text{tors}} = \operatorname{span}_{\mathbb{C}}\{x^k e^{\lambda x} : k \in \mathbb{Z}_{\ge 0},\; \lambda \in \mathbb{C}\}.
\]
Consequently, the zero-eigenspace $\ker p(D)$ of any polynomial differential operator is a finite-dimensional submodule residing entirely within $M_{\text{tors}}$. Functions that cannot be formed by finite linear combinations of quasipolynomials (such as $\ln(x)$ or $\tan(x)$) are \emph{torsion-free}; they will never be annihilated by any constant-coefficient linear differential operator.

\subsection{Systems of Equations and Matrix Diagonalization}
This operator-theoretic perspective extends to systems of linear differential equations, bridging the continuous calculus of ODEs with the discrete algebra of matrices \cite{HirschSmale}. Consider the first-order system $\mathbf{y}' = A\mathbf{y}$, where $A$ is an $n \times n$ matrix of constant complex coefficients. Our objective is to determine the explicit basis for the zero-eigenspace of the matrix differential operator $(D - A)$. 

To construct this basis, we demonstrate the canonical isomorphism between the geometric eigenvectors of $A$ and the continuous eigenfunctions of $(D - A)$.

\begin{Lemma}\label{lem:isomorphism}
Let $V_\lambda = \ker(A - \lambda I)$ denote the geometric eigenspace of the matrix $A$ associated with the eigenvalue $\lambda$. A vector-valued function of the form $\mathbf{f}(x) = \mathbf{v}e^{\lambda x}$ (where $\mathbf{v} \neq \mathbf{0}$) lies in $\ker(D - A)$ if and only if $\mathbf{v} \in V_\lambda$. Moreover, the mapping $\mathbf{v} \mapsto \mathbf{v}e^{\lambda x}$ defines a canonical isomorphism between the discrete geometric eigenspace $V_\lambda$ and the continuous pure-exponential zero-eigenspace of $(D - A)$.
\end{Lemma}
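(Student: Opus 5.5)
The plan is a direct computation followed by a check that the natural map is linear and bijective onto the right target. First, for a constant vector $\mathbf{v}\in\mathbb{C}^n$ and $\lambda\in\mathbb{C}$, I compute the action of $(D-A)$ on $\mathbf{f}(x)=\mathbf{v}e^{\lambda x}$ componentwise. Since $D$ acts on each coordinate and $A$ has constant entries, this gives
\[
(D-A)\bigl(\mathbf{v}e^{\lambda x}\bigr)=\lambda\mathbf{v}e^{\lambda x}-A\mathbf{v}e^{\lambda x}=-\bigl((A-\lambda I)\mathbf{v}\bigr)e^{\lambda x}.
\]
Because $e^{\lambda x}$ never vanishes, this vector-valued function is identically zero if and only if $(A-\lambda I)\mathbf{v}=\mathbf{0}$, that is, if and only if $\mathbf{v}\in V_\lambda$. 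This proves the first assertion. Note that $\mathbf{v}\neq\mathbf{0}$ is only needed so that $\mathbf{f}$ counts as an eigenfunction; the equivalence holds trivially for $\mathbf{v}=\mathbf{0}$.

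For the isomorphism, I define $\Phi_\lambda:V_\lambda\to C^\infty(\mathbb{R})^n$ by $\Phi_\lambda(\mathbf{v})=\mathbf{v}e^{\lambda x}$. This map is evidently $\mathbb{C}$-linear. I take the "pure-exponential zero-eigenspace" to mean
\[
E_\lambda=\{\mathbf{v}e^{\lambda x}:\mathbf{v}\in\mathbb{C}^n\}\cap\ker(D-A),
\]
which is a subspace because it is an intersection of two subspaces. By the first step, $\Phi_\lambda$ maps $V_\lambda$ into $E_\lambda$ and is surjective onto it. Injectivity follows by evaluating at $x=0$: if $\mathbf{v}e^{\lambda x}\equiv\mathbf{0}$, then $\mathbf{v}=\mathbf{0}$. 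Evaluation at $0$ also provides the explicit inverse map. The map is canonical because it involves no choice of basis.

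The only real obstacle is pinning down precisely what "pure-exponential zero-eigenspace" means so that it is a vector space. I would fix it as $E_\lambda$ above, for the given $\lambda$, rather than as a union over different exponents. The latter would fail to be closed under addition, since distinct exponentials are linearly independent, a fact used earlier in the proof of Theorem 3.4. With that convention fixed, the rest of the argument is routine.
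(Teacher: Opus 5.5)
Your proof is correct and takes the same route as the paper: compute $(D-A)(\mathbf{v}e^{\lambda x}) = (\lambda I - A)\mathbf{v}e^{\lambda x}$ and use that $e^{\lambda x}$ never vanishes. Your explicit definition of the target space $E_\lambda$ and your injectivity argument by evaluating at $x=0$ make precise what the paper compresses into the single claim that the map is linear and bijective.
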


\begin{proof}
Suppose $\mathbf{f}(x) = \mathbf{v}e^{\lambda x} \in \ker(D - A)$. Applying the operator yields:
\[
(D - A)(\mathbf{v}e^{\lambda x}) = \mathbf{v}D(e^{\lambda x}) - A\mathbf{v}e^{\lambda x} = \lambda\mathbf{v}e^{\lambda x} - A\mathbf{v}e^{\lambda x} = (\lambda I - A)\mathbf{v}e^{\lambda x}.
\]
For this expression to equal the zero vector, and given that $e^{\lambda x} \neq 0$ for all $x$, we must have $(\lambda I - A)\mathbf{v} = \mathbf{0}$. This implies $A\mathbf{v} = \lambda\mathbf{v}$, confirming that $\mathbf{v}$ is an eigenvector of $A$ and $\mathbf{v} \in V_\lambda$. The converse follows identically. The map is linear and bijective, thereby establishing the isomorphism.
\end{proof}

As a direct consequence of this canonical isomorphism, the complete eigenfunction basis for the differential system is derived from the diagonalization of the matrix $A$.

\begin{Theorem}
Let $A$ be an $n \times n$ diagonalizable matrix with linearly independent eigenvectors $\mathbf{v}_1, \dots, \mathbf{v}_n$ corresponding to eigenvalues $\lambda_1, \dots, \lambda_n$. A smooth vector-valued function $\mathbf{f}(x)$ lies in the zero-eigenspace $\ker(D - A)$ if and only if it can be expressed as a linear combination of the pure-exponential functions $\mathbf{v}_j e^{\lambda_j x}$. Consequently, the explicit basis for $\ker(D-A)$ is $\{ \mathbf{v}_1 e^{\lambda_1 x}, \dots, \mathbf{v}_n e^{\lambda_n x} \}$.
\end{Theorem}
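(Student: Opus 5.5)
The natural route is to diagonalize the system by a constant change of variables, reducing it to $n$ decoupled scalar equations to which Corollary 3.3 (with $m=1$) applies. Let $P$ be the invertible matrix whose columns are $\mathbf{v}_1,\dots,\mathbf{v}_n$, so that $P^{-1}AP=\Lambda=\operatorname{diag}(\lambda_1,\dots,\lambda_n)$. For a smooth $\mathbf{f}$, define $\mathbf{g}=P^{-1}\mathbf{f}$. Since $P$ is constant, $D$ commutes with multiplication by $P^{-1}$, and so
\[
P^{-1}(D-A)\mathbf{f}=(D-\Lambda)\mathbf{g}.
\]
This is the matrix analogue of the conjugation identity $T_{-\lambda}\circ(D-\lambda)\circ T_\lambda=D$ used in Step 2. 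It shows that $\mathbf{f}\in\ker(D-A)$ if and only if $\mathbf{g}\in\ker(D-\Lambda)$, because $P^{-1}$ is injective.

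In the diagonal system $(D-\Lambda)\mathbf{g}=\mathbf{0}$ the components decouple into $(D-\lambda_j)g_j=0$ for $j=1,\dots,n$. By Corollary 3.3 with $m=1$, each solution has the form $g_j=c_je^{\lambda_jx}$ with $c_j\in\mathbb{C}$. Transforming back,
\[
\mathbf{f}=P\mathbf{g}=\sum_j c_j\mathbf{v}_je^{\lambda_jx}.
\]
This proves the ``only if'' direction. The ``if'' direction is immediate from Lemma~\ref{lem:isomorphism} together with linearity of $D-A$: each $\mathbf{v}_je^{\lambda_jx}$ lies in the kernel, and the kernel is a subspace.

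It remains to show that these $n$ functions are linearly independent, and this is the only point needing care. Eigenvalues may repeat, so I cannot simply cite independence of distinct exponentials. Instead I evaluate at $x=0$. If $\sum_j c_j\mathbf{v}_je^{\lambda_jx}\equiv\mathbf{0}$, then setting $x=0$ gives $\sum_j c_j\mathbf{v}_j=\mathbf{0}$, and the independence of the $\mathbf{v}_j$ forces every $c_j=0$. Hence $\{\mathbf{v}_je^{\lambda_jx}\}$ is a basis and $\dim\ker(D-A)=n$.

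The main obstacle is therefore not conceptual; it is making sure the change of variables is justified. The key points are that $P$ is constant, so it commutes with $D$, and that the scalar reduction relies only on the $m=1$ case already established in the paper.
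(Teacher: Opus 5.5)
Your proof is correct and follows essentially the same route as the paper: diagonalize $A = P\Lambda P^{-1}$, pass to $\mathbf{g} = P^{-1}\mathbf{f}$, solve the decoupled scalar equations $g_j' = \lambda_j g_j$, transform back, and take the converse from Lemma~\ref{lem:isomorphism}. Your evaluation at $x=0$ to show the functions $\mathbf{v}_j e^{\lambda_j x}$ are linearly independent, even when eigenvalues repeat, is a worthwhile addition, because the paper states the basis conclusion without checking independence.
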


\begin{proof}
By Lemma~\ref{lem:isomorphism}, any function of the form $\mathbf{v}_j e^{\lambda_j x}$ belongs to the kernel. Conversely, assume $(D - A)\mathbf{f}(x) = \mathbf{0}$, which implies $\mathbf{f}'(x) = A\mathbf{f}(x)$.
Because $A$ is diagonalizable, it admits the decomposition $A = P\Lambda P^{-1}$, where $P$ is the matrix whose columns are the eigenvectors $\mathbf{v}_j$, and $\Lambda$ is the diagonal matrix of eigenvalues $\lambda_j$. Substituting this yields $\mathbf{f}'(x) = P\Lambda P^{-1}\mathbf{f}(x)$.
Defining the transformed variable $\mathbf{g}(x) = P^{-1}\mathbf{f}(x)$, we obtain the algebraically decoupled system $\mathbf{g}'(x) = \Lambda \mathbf{g}(x)$.
Since $\Lambda$ is diagonal, this system uncouples into $n$ independent scalar equations $g_j'(x) = \lambda_j g_j(x)$. By Section 3.1, the solution to each is $g_j(x) = c_j e^{\lambda_j x}$. Applying the transformation matrix $P$ recovers the original function:
\[
\mathbf{f}(x) = P\mathbf{g}(x) = \sum_{j=1}^n c_j e^{\lambda_j x} \mathbf{v}_j.
\]
Therefore, every function in the zero-eigenspace of $(D-A)$ is restricted to a linear combination of the geometric eigenvectors $\mathbf{v}_j$ scaled by $e^{\lambda_j x}$.
\end{proof}

When the matrix $A$ is defective (not diagonalizable), the required generalized eigenvectors pair with polynomial multipliers, mirroring the scalar methodology articulated in Section 3.2. 

\begin{Example}[Defective Matrices and Generalized Eigenvectors]
Consider the system $\mathbf{y}' = A\mathbf{y}$ where $A = \begin{pmatrix} \lambda & 1 \\ 0 & \lambda \end{pmatrix}$. The matrix possesses a repeated eigenvalue $\lambda$ but only a single linearly independent eigenvector $\mathbf{v}_1 = \begin{pmatrix} 1 \\ 0 \end{pmatrix}$. The canonical eigenfunction is $\mathbf{y}_1(x) = \mathbf{v}_1 e^{\lambda x}$. 

To complete the basis, we recall from scalar theory that a repeated root requires the polynomial multiplier $x$. However, a naive trial function $\mathbf{v}_1 x e^{\lambda x}$ fails; applying $(D-A)$ yields a non-zero remainder of $\mathbf{v}_1 e^{\lambda x}$ due to the product rule of differentiation. 

To annihilate this remainder, we must append a constant vector term $\mathbf{v}_2 e^{\lambda x}$, yielding the trial function:
\[
\mathbf{y}_2(x) = \mathbf{v}_1 x e^{\lambda x} + \mathbf{v}_2 e^{\lambda x}.
\]
Applying $(D-A)$ to this function and setting it to $\mathbf{0}$ mandates that $(A - \lambda I)\mathbf{v}_2 = \mathbf{v}_1$. This is the definition of a generalized eigenvector. For our specific matrix, computing this yields $\mathbf{v}_2 = \begin{pmatrix} 0 \\ 1 \end{pmatrix}$. Thus, the polynomial multiplier $x$ developed in scalar theory is preserved, but it algebraically forces the creation of a generalized eigenvector to maintain the kernel.
\end{Example}

Furthermore, we can elevate this algebraic decoupling to handle higher-order systems governed by the generalized operator $(p(D) - A)$. Note that $p(D)$ is a scalar polynomial operator; when applied to a vector function, it acts identically on each component, meaning it is algebraically equivalent to the operator $p(D)I$. Consider the classical coupled oscillator system $\mathbf{y}'' = A\mathbf{y}$, which equates to finding the kernel of the operator $(D^2 - A)$. Applying this operator to the pure-exponential trial function $\mathbf{v}e^{rx}$ yields:
\[
(p(D) - A)\mathbf{v}e^{rx} = p(r)I\mathbf{v}e^{rx} - A\mathbf{v}e^{rx} = (p(r)I - A)\mathbf{v}e^{rx} = \mathbf{0}.
\]
This algebraic reduction proves two simultaneous structural requirements: the constant spatial vector $\mathbf{v}$ must be an eigenvector of the matrix $A$ associated with some eigenvalue $\lambda_j$, and the scalar rate $r$ must be a root of the shifted scalar polynomial $p(r) = \lambda_j$. Solving a coupled system of higher-order differential equations is thereby reduced to discrete matrix diagonalization and sequential scalar polynomial factorization.

\section{Non-Homogeneous Equations and Constant Coefficients}

In this section we study linear non-homogeneous ordinary differential equations of the form
\[
p(D)y=f,
\]
where $p(D)\in\mathbb{C}[D]$ and the forcing term $f$ resides within the torsion submodule $M_{\text{tors}}$. Our objective is to reinterpret classical solution methods using the algebraic framework developed in the previous sections.

The guiding principle is to construct a finite-dimensional $\mathbb{C}$-vector space $F\subset C^\infty(\mathbb{R})$, designated as a \emph{trial space}, satisfying the following algebraic properties:
\begin{enumerate}
\item $F$ is invariant under $D$, and hence under $p(D)$;
\item the restriction $\left.p(D)\right|_F : F\to F$ is an automorphism;
\item $F$ is minimal in dimension, ensuring that no redundant functions are included in the trial basis;
\item $\ker p(D)\cap F=\{0\}$.
\end{enumerate}
Under these conditions, the equation $p(D)y=f$ admits a unique solution in $F$, which serves as a particular solution of the differential equation \cite{BirkhoffRota}.

\begin{Remark}
While the space $F$ isolates a unique particular solution, the selection of the trial space itself is not unique. Distinct admissible trial spaces may generate different particular solutions; however, any two such solutions differ by an element of $\ker p(D)$. This reflects the inherent affine geometry of the general solution space, $\mathcal{S}_f$, visualized in Figure~\ref{fig:affine_coset}.
\end{Remark}

\begin{figure}[htbp]
\centering
\begin{tikzpicture}[scale=1.2]
    \draw[->, thick, gray!80] (-1,0,0) -- (3,0,0) node[right]{};
    \draw[->, thick, gray!80] (0,-1,0) -- (0,3,0) node[above]{$C^\infty(\mathbb{R})$};
    \draw[->, thick, gray!80] (0,0,-1) -- (0,0,3) node[below left]{};

    \filldraw[fill=blue!10, draw=blue!80, thick, opacity=0.7]
        (-1, -0.5, 0) -- (2.5, 1.25, 0) -- (1.5, 2.25, 0) -- (-2, 0.5, 0) -- cycle;
    \node[blue!80!black] at (0.7, 0.6, 0) {$\ker p(D)$};

    \draw[->, thick, red, >=stealth] (0,0,0) -- (0,1.5,1.5) node[midway, left]{$y_p$};

    \filldraw[fill=red!10, draw=red!80, thick, opacity=0.7]
        (-1, 1, 1.5) -- (2.5, 2.75, 1.5) -- (1.5, 3.75, 1.5) -- (-2, 2, 1.5) -- cycle;
    \node[red!80!black] at (1, 2.2, 0.6) {$\mathcal{S}_f = y_p + \ker p(D)$};

    \filldraw[black] (0,0,0) circle (1.5pt) node[anchor=north west] {$\mathbf{0}$};
    \filldraw[red] (0,1.5,1.5) circle (1.5pt);
\end{tikzpicture}
\caption{The algebraic structure of the general solution to $p(D)y = f$. The particular solution $y_p$ acts as a translation vector, shifting the homogeneous subspace $\ker p(D)$ into the parallel affine coset $\mathcal{S}_f$.}
\label{fig:affine_coset}
\end{figure}

\subsection{The Method of Undetermined Coefficients}

We first formalize the classical method of undetermined coefficients.

Let the forcing function be
\[
f(x)=x^{k_0} e^{\mu x},
\]
where $k_0\in\mathbb{N}\cup\{0\}$ and $\mu\in\mathbb{C}$. Let $m_0 \ge 0$ be the algebraic multiplicity of $\mu$ as a root of $p(D)$ (where $m_0 = 0$ if $p(\mu) \neq 0$).

For any $r \in \mathbb{N}\cup\{0\}$, define the trial space
\[
F_r := x^r \cdot \operatorname{span}_{\mathbb{C}}\{D^n f : n\ge 0\},
\]
and recall the homogeneous subspace associated with the exponential rate $\mu$:
\[
\ker (D-\mu)^{m_0}
=
\operatorname{span}_{\mathbb{C}}\{e^{\mu x}, xe^{\mu x}, \ldots, x^{m_0-1}e^{\mu x}\}.
\]
(If $m_0=0$, this space is trivially $\{0\}$).

\begin{Lemma}
With the notation established above, the following statements hold:
\begin{enumerate}
\item $F_r = x^r e^{\mu x}\,\mathbb{C}[x]_{\le k_0}$;
\item $F_r\cap \ker(D-\mu)^{m_0}=\{0\}$ if and only if $r\ge m_0$.
\end{enumerate}
\end{Lemma}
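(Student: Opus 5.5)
For part (1), I will first compute $V:=\operatorname{span}_{\mathbb{C}}\{D^n f : n\ge 0\}$ and then multiply by $x^r$.

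The key observation is the identity
\[
D\bigl(q(x)e^{\mu x}\bigr)=\bigl(q'(x)+\mu q(x)\bigr)e^{\mu x}.
\]
It shows that the space $W:=e^{\mu x}\,\mathbb{C}[x]_{\le k_0}$ is invariant under $D$. Since $f\in W$, we get $V\subseteq W$.

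For the reverse inclusion I will work through the conjugation $T_\mu$ from Step 2. The relation $T_{-\mu}\circ(D-\mu)\circ T_\mu=D$ identifies $(D-\mu)|_W$ with ordinary differentiation on $\mathbb{C}[x]_{\le k_0}$. Hence $(D-\mu)^j f=\frac{k_0!}{(k_0-j)!}\,x^{k_0-j}e^{\mu x}$ for $0\le j\le k_0$. Each $(D-\mu)^j f$ is a polynomial in $D$ applied to $f$, so it lies in $V$. These $k_0+1$ functions have distinct degrees, so they span $W$. Therefore $V=W$. Multiplying by $x^r$ then gives $F_r=x^r e^{\mu x}\mathbb{C}[x]_{\le k_0}$. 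The degenerate case $\mu=0$ is covered by the same argument, since then $T_0$ is the identity.

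For part (2), I will use the Corollary of Step 2: $\ker(D-\mu)^{m_0}=e^{\mu x}\,\mathbb{C}[x]_{\le m_0-1}$, which is $\{0\}$ when $m_0=0$. By part (1), every element of $F_r$ has the form $e^{\mu x}g(x)$, where $g$ is a polynomial divisible by $x^r$ of degree at most $r+k_0$. Because $e^{\mu x}$ never vanishes, comparing elements of the two spaces reduces to comparing polynomials. So the intersection consists of $e^{\mu x}g$ with $x^r\mid g$ and $\deg g\le \min(m_0-1,\,r+k_0)$.

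If $r\ge m_0$, a nonzero polynomial divisible by $x^r$ has degree at least $r>m_0-1$. So $g=0$ and the intersection is trivial.

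Conversely, if $r<m_0$, the function $x^r e^{\mu x}$ lies in $F_r$, since it is the element with constant polynomial $1\in\mathbb{C}[x]_{\le k_0}$. It also lies in $\ker(D-\mu)^{m_0}$, because $r\le m_0-1$. This gives a nonzero element of the intersection.

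The only step needing real care is the reverse inclusion $W\subseteq V$ in part (1), since the span is defined through powers of $D$ rather than of $D-\mu$. The way around this is that $\mathbb{C}[D]=\mathbb{C}[D-\mu]$, so the two spans coincide. Everything else is degree bookkeeping.
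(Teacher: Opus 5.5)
Your proof is correct and follows the same route as the paper. Both identify the derivative span as $e^{\mu x}\mathbb{C}[x]_{\le k_0}$ and then compare the lowest power $x^r$ in $F_r$ with the highest power $x^{m_0-1}$ in $\ker(D-\mu)^{m_0}$; you add the details the paper's ``by repeated differentiation'' and ``powers of $x$ overlap'' leave implicit, namely the reverse inclusion via $(D-\mu)^j f\in\operatorname{span}_{\mathbb{C}}\{D^n f : n\ge 0\}$ and the explicit common element $x^r e^{\mu x}$ when $r<m_0$.
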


\begin{proof}
By repeated differentiation, the generated space is
\[
F_r
=
x^r e^{\mu x}\mathbb{C}[x]_{\le k_0}
=
\operatorname{span}_{\mathbb{C}}\{x^r e^{\mu x}, x^{r+1}e^{\mu x}, \ldots, x^{r+k_0}e^{\mu x}\},
\]
so the space $F_r$ has dimension $k_0+1$. On the other hand,
\[
\ker(D-\mu)^{m_0}
=
\operatorname{span}_{\mathbb{C}}\{e^{\mu x}, xe^{\mu x}, \ldots, x^{m_0-1}e^{\mu x}\}.
\]
An intersection exists only if the powers of $x$ overlap. Because the lowest degree in $F_r$ is $r$, and the highest degree in the kernel is $m_0-1$, the spaces are disjoint if and only if $r > m_0-1$. As $r$ and $m_0$ are integers, this implies:
\[
F_r \cap \ker(D-\mu)^{m_0}=\{0\}
\quad \Longleftrightarrow \quad
r\ge m_0.
\]
\end{proof}

\begin{Theorem}
There exists a finite-dimensional vector subspace $F \subset C^\infty(\mathbb{R})$ in which the equation $p(D)y=f$ has a unique solution.
\end{Theorem}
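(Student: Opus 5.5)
The plan is to take $F := F_{m_0} = x^{m_0}e^{\mu x}\,\mathbb{C}[x]_{\le k_0}$ and show that $p(D)$ maps $F_{m_0}$ bijectively onto $F_0 = e^{\mu x}\,\mathbb{C}[x]_{\le k_0}$. Since $f = x^{k_0}e^{\mu x} \in F_0$, this gives exactly one $y\in F$ with $p(D)y=f$. Note that $F_{m_0}$ is not $p(D)$-invariant when $m_0>0$, so the automorphism condition in the list of desired properties cannot hold literally. The statement only asks for existence and uniqueness of a solution inside $F$, and a bijection $F\to F_0$ with $f\in F_0$ delivers exactly that.

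First I would factor $p(D) = q(D)(D-\mu)^{m_0}$ with $q(\mu)\neq 0$; this is possible by the definition of $m_0$ as the multiplicity of $\mu$. Next I would conjugate by exponentials as in Step 2, using $T_{-\mu}\circ p(D)\circ T_{\mu} = p(D+\mu)$. This identity follows from the identity $T_{-\lambda}\circ(D-\lambda)\circ T_\lambda = D$ used in the earlier Lemma, applied factor by factor. Writing $y = e^{\mu x}g$, the equation becomes
\[
p(D)\bigl(e^{\mu x}g\bigr) = e^{\mu x}\, q(D+\mu)\,D^{m_0} g .
\]
The problem is thereby reduced to showing that $q(D+\mu)D^{m_0}$ maps $x^{m_0}\mathbb{C}[x]_{\le k_0}$ bijectively onto $\mathbb{C}[x]_{\le k_0}$.

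This splits into two pieces.
\begin{enumerate}
\item $D^{m_0}$ sends $x^{m_0+j}$ to a nonzero multiple of $x^j$ for $0\le j\le k_0$. Hence it maps $x^{m_0}\mathbb{C}[x]_{\le k_0}$ bijectively onto $\mathbb{C}[x]_{\le k_0}$, since it carries a basis to a basis. This is also where the earlier Lemma enters: the kernel of $D^{m_0}$ consists of polynomials of degree $<m_0$, which meet $F_{m_0}$ trivially because $r=m_0\ge m_0$.
\item $q(D+\mu) = q(\mu) + D\,s(D)$ for some polynomial $s$, and $D$ is nilpotent on the finite-dimensional space $\mathbb{C}[x]_{\le k_0}$. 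So $q(D+\mu)$ is a nonzero scalar plus a nilpotent operator there, hence an automorphism. Concretely, it is upper triangular in the monomial basis with diagonal entries $q(\mu)\neq 0$.
\end{enumerate}

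Composing the two pieces gives the bijection $F_{m_0}\to F_0$, so a solution exists in $F$. Uniqueness within $F$ follows because two solutions would differ by an element of $\ker p(D)\cap F_{m_0}$, which is zero by injectivity.

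The main obstacle I expect is the bookkeeping in the reduction step: making sure $q(D+\mu)$ acts on the target space $\mathbb{C}[x]_{\le k_0}$ and not on $x^{m_0}\mathbb{C}[x]_{\le k_0}$, which it does not preserve. This is why I apply $D^{m_0}$ first. Everything else is finite-dimensional triangular linear algebra.
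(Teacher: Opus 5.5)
Your proof is correct, and it departs from the paper's at exactly the point where the paper's argument breaks.

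The paper makes the same choice $F=F_{m_0}$. It gets injectivity from $F\cap\ker p(D)=\{0\}$, using the preceding Lemma together with the direct-sum decomposition of Corollary 3.5 and the fact that every element of $F$ has the form $P(x)e^{\mu x}$. It then asserts that $F$ is invariant under $p(D)$, and concludes that $p(D)|_F$ is an injective endomorphism of $F$, hence an automorphism.

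As you point out, this invariance is false when $m_0>0$. For $p(D)=D-\mu$ and $f=e^{\mu x}$ one has $F_1=\operatorname{span}_{\mathbb{C}}\{xe^{\mu x}\}$, but $p(D)F_1=\operatorname{span}_{\mathbb{C}}\{e^{\mu x}\}\not\subseteq F_1$. Moreover $f\notin F_{m_0}$ in the resonant case, so even an automorphism of $F$ would not by itself yield a solution of $p(D)y=f$. The paper's existence step is therefore valid only when $m_0=0$.

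Your replacement is the correct statement: $p(D)$ is a bijection $F_{m_0}\to F_0$ with $f\in F_0$. You obtain it from the conjugation $T_{-\mu}\,p(D)\,T_{\mu}=q(D+\mu)D^{m_0}$ and an explicit triangular structure. This also makes the paper's appeal to Corollary 3.5, and to linear independence of distinct exponential types, unnecessary, since injectivity falls out of the bijection.

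If you wanted to stay closer to the paper, you could keep its injectivity step and prove only the inclusion $p(D)F_{m_0}\subseteq F_0$. That inclusion still needs your factorization $p(D)=q(D)(D-\mu)^{m_0}$ to see that the degree drops by $m_0$. You would then finish with the dimension count $\dim F_{m_0}=\dim F_0=k_0+1$.
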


\begin{proof}
We construct $F$ by taking the minimal shift $r_0 = m_0$ dictated by the preceding lemma, defining $F = F_{m_0}$. By construction, $F \cap \ker(D-\mu)^{m_0} = \{0\}$. Because any element in $F$ is a quasipolynomial of the form $P(x)e^{\mu x}$, its intersection with the generalized eigenspaces of $p(D)$ associated with roots other than $\mu$ is trivially zero. By the direct sum decomposition of $\ker p(D)$ established in Corollary 3.5, this guarantees that $F \cap \ker p(D) = \{0\}$.

Because $F$ is a finite-dimensional trial space, invariant under $p(D)$, and satisfies $F \cap \ker p(D) = \{0\}$, the restriction $\left.p(D)\right|_F : F \to F$ is an injective endomorphism. In finite dimensions, injectivity mathematically guarantees bijectivity. Thus, $\left.p(D)\right|_F$ is an automorphism, and $p(D)y=f$ possesses a unique solution residing in $F$.
\end{proof}

The generalization of the theorem proceeds in two steps.

\noindent\textbf{Step 1: Superposition of Forcing Terms}\\
Let $p(D)$ be an arbitrary operator in $\mathbb{C}[D]$, and let the forcing function be a finite sum of exponential-polynomials where the rates $\lambda_j \in \mathbb{C}$ are pairwise distinct (grouping any terms with identical rates by their maximum degree $k_j$):
\[
f(x) = \sum_{j=1}^N x^{k_j} e^{\lambda_j x}.
\]

\begin{Theorem}
With $p(D)$ and $f(x)$ defined as above, there exists a finite-dimensional vector subspace $F$ in which $p(D)y = f$ has a unique solution.
\end{Theorem}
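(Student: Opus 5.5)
The plan is to build $F$ as a direct sum of the single-rate trial spaces from the preceding theorem, one for each forcing term. For each $j=1,\dots,N$ let $m_j\ge 0$ be the multiplicity of $\lambda_j$ as a root of $p$. Applying the preceding lemma with $k_0=k_j$, $\mu=\lambda_j$ and $r=m_j$, set
\[
F^{(j)} := F_{m_j}^{(j)} = x^{m_j}e^{\lambda_j x}\,\mathbb{C}[x]_{\le k_j},
\qquad
F := F^{(1)}+F^{(2)}+\cdots+F^{(N)}.
\]
By the preceding theorem, each restriction $p(D)|_{F^{(j)}}$ is an automorphism of $F^{(j)}$. So for each $j$ there is a unique $y_j\in F^{(j)}$ with $p(D)y_j = x^{k_j}e^{\lambda_j x}$. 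Linearity of $p(D)$ then shows that $y=\sum_j y_j\in F$ solves $p(D)y=f$. This is the superposition principle, and existence follows immediately from it.

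For uniqueness, the main point is that the sum defining $F$ is direct, i.e.\ $F=\bigoplus_j F^{(j)}$. Each $F^{(j)}$ lies inside $\ker (D-\lambda_j)^{m_j+k_j+1}$, and the $\lambda_j$ are pairwise distinct. The direct sum decomposition of Corollary 3.5, applied to $q(D)=\prod_j (D-\lambda_j)^{m_j+k_j+1}$, shows that the generalized eigenspaces for distinct rates are independent. Hence the sum is direct, and $\dim F=\sum_j (k_j+1)<\infty$.

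The subspace $F$ is invariant under $D$, because each summand is. The summand $x^{m_j}e^{\lambda_j x}\mathbb{C}[x]_{\le k_j}$ consists of the functions $P(x)e^{\lambda_j x}$ with $P$ having degree at most $m_j+k_j$ and vanishing to order $m_j$ at $0$. Invariance under $D$ itself can fail: $(D-\lambda_j)$ lowers the degree of $P$. Invariance under $p(D)$ does hold, however. Write $p(D)=(D-\lambda_j)^{m_j}s(D)$ with $s(\lambda_j)\neq 0$. The factor $(D-\lambda_j)^{m_j}$ sends $x^{m_j+i}e^{\lambda_j x}$ into $e^{\lambda_j x}\mathbb{C}[x]$, and the result need not lie back in $F^{(j)}$. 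I will therefore rely on exactly the invariance that the preceding theorem asserted for $F_{m_0}$ and use it as stated. If it is challenged, the fallback is to replace each $F^{(j)}$ by $e^{\lambda_j x}\mathbb{C}[x]_{\le m_j+k_j}$, which is genuinely $D$-invariant, and to obtain uniqueness modulo $\ker p(D)$ instead.

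Granting invariance, the argument finishes as follows. Since $p(D)F^{(j)}\subseteq F^{(j)}$, the operator $p(D)|_F$ is block diagonal with respect to $F=\bigoplus_j F^{(j)}$, and each block is an automorphism. Hence $p(D)|_F$ is an automorphism of $F$, and $p(D)y=f$ has exactly one solution in $F$. Alternatively, one can check $\ker p(D)\cap F=\{0\}$ directly: decompose an element of the intersection into its rate components, use Corollary 3.5 to see that each component lies in $\ker(D-\lambda_j)^{m_j}\cap F^{(j)}$, and this is $\{0\}$ by the preceding lemma. Injectivity on a finite-dimensional space then gives bijectivity. The step I expect to be the real obstacle is justifying $p(D)$-invariance of the shifted spaces $F^{(j)}$ when $m_j>0$.
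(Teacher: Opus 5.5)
Your approach is the paper's: take the single-rate spaces $F^{(j)}=x^{m_j}e^{\lambda_j x}\mathbb{C}[x]_{\le k_j}$, form their direct sum, and use superposition. Your suspicion about invariance is correct. $p(D)F^{(j)}\subseteq F^{(j)}$ is false whenever $m_j\ge 1$: for $p(D)=D-1$ and $f=e^x$ one gets $F=\operatorname{span}\{xe^x\}$, but $p(D)(xe^x)=e^x\notin F$. The paper's proof asserts the same thing (``$p(D)|_{F_j}$ is an automorphism'', ``$F$ is $p(D)$-invariant''), so relying on it ``as stated'' imports a false claim rather than closing the gap.

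Your alternative paragraph does correctly show $\ker p(D)\cap F=\{0\}$, and hence uniqueness. But ``injectivity on a finite-dimensional space gives bijectivity'' only applies to an endomorphism of $F$, which is exactly what you lack. So existence of $y_j\in F^{(j)}$ with $p(D)y_j=x^{k_j}e^{\lambda_j x}$ still rests on the false automorphism claim. The fallback cannot work either. When some $m_j\ge 1$, any finite-dimensional $D$-invariant space containing a solution also contains $e^{\lambda_j x}\in\ker p(D)$. You would then get only uniqueness modulo $\ker p(D)$, which is not the statement.

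The missing idea is to replace invariance by the correct target space. Factor $p(D)=(D-\lambda_j)^{m_j}s_j(D)$ with $s_j(\lambda_j)\neq 0$. The factor $s_j(D)$ preserves $e^{\lambda_j x}\mathbb{C}[x]_{\le m_j+k_j}$, and $(D-\lambda_j)^{m_j}$ sends $P e^{\lambda_j x}$ to $P^{(m_j)}e^{\lambda_j x}$. Hence $p(D)F^{(j)}\subseteq G^{(j)}:=e^{\lambda_j x}\mathbb{C}[x]_{\le k_j}$, a space that contains $x^{k_j}e^{\lambda_j x}$. Your kernel argument makes $p(D)\colon F^{(j)}\to G^{(j)}$ injective, and both spaces have dimension $k_j+1$, so this map is a bijection. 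The $G^{(j)}$ are independent because the rates are distinct, by the same use of Corollary 3.5 you made for the $F^{(j)}$. Therefore $p(D)$ maps $F=\bigoplus_j F^{(j)}$ bijectively onto $G=\bigoplus_j G^{(j)}$, and $f\in G$. This gives existence and uniqueness of the solution in $F$ together, with no invariance needed.
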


\begin{proof}
By the preceding theorem, for each individual forcing term $f_j(x) = x^{k_j} e^{\lambda_j x}$, there exists a finite-dimensional trial space $F_j$ such that the restricted operator $\left.p(D)\right|_{F_j}$ is an automorphism. Consequently, each sub-equation $p(D)y = f_j$ has a unique solution $y_j \in F_j$. Because the exponential rates $\lambda_j$ are pairwise distinct, the trial spaces intersect trivially. Thus, by the linearity of $p(D)$, the direct sum $F = \bigoplus_{j=1}^N F_j$ is a finite-dimensional, $p(D)$-invariant trial space that contains the superposition solution $y_p = \sum_{j=1}^N y_j$.
\end{proof}

\noindent\textbf{Step 2: The Fully Factored Operator}\\
We now complete the generalization by evaluating the required trial space when $p(D)$ is the product of linear coprime factors raised to their maximum powers.

Let
\[
p(D)=(D-\mu_1)^{m_1}(D-\mu_2)^{m_2}\cdots(D-\mu_d)^{m_d},
\]
where $\mu_1,\ldots,\mu_d\in\mathbb{C}$ are pairwise distinct, and let
\[
f(x) = \sum_{j=1}^N x^{k_j} e^{\lambda_j x},
\]
where the exponential rates $\lambda_1, \ldots, \lambda_N$ are pairwise distinct.

\begin{Theorem}
For the operator $p(D)$ and forcing function $f(x)$ defined above, the minimal finite-dimensional trial space $F$ in which $p(D)y=f$ has a unique solution is given by the direct sum
\[
F = \bigoplus_{j=1}^N F_{r_j}, \qquad \text{where } F_{r_j} = x^{r_j} \cdot \operatorname{span}_{\mathbb{C}}\{D^n (x^{k_j} e^{\lambda_j x}) : n \ge 0\},
\]
and the explicitly required algebraic enlargement for each individual term is
\[
r_j = m(\lambda_j),
\]
where $m(\lambda_j)$ is the algebraic multiplicity of $\lambda_j$ as a root of $p(D)$ (specifically, $m(\lambda_j) = m_i$ if $\lambda_j = \mu_i$, and $0$ otherwise).
\end{Theorem}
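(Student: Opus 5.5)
The plan is to reduce the statement to the single-term theorem proved above, applied term by term, and then to check two extra things: that the pieces combine into a direct sum, and that each shift $r_j$ is as small as it can be. The existence part needs only a short assembly step. For each $j$, let $m(\lambda_j)$ be the multiplicity of $\lambda_j$ as a root of $p(D)$, which is $m_i$ if $\lambda_j=\mu_i$ and $0$ otherwise. The proof of the single-term theorem, run with $\mu=\lambda_j$ and $m_0=m(\lambda_j)$, shows that $F_{r_j}=x^{r_j}e^{\lambda_j x}\mathbb{C}[x]_{\le k_j}$ meets $\ker p(D)$ only in $0$. That proof relies on the preceding lemma together with the decomposition of Corollary 3.5.

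One point needs care. $F_{r_j}$ as written is not literally $D$-invariant when $r_j>0$, because differentiating $x^{r_j}e^{\lambda_j x}$ produces $x^{r_j-1}$ terms. The paper's earlier argument treats it as invariant. I would repair this by working with the $D$-invariant space $G_j=e^{\lambda_j x}\mathbb{C}[x]_{\le r_j+k_j}$. This space splits as
\[
G_j=\ker(D-\lambda_j)^{r_j}\oplus F_{r_j}.
\]
The operator $p(D)=q_j(D)(D-\lambda_j)^{r_j}$ maps $G_j$ into $e^{\lambda_j x}\mathbb{C}[x]_{\le k_j}$, and its kernel on $G_j$ is exactly $\ker(D-\lambda_j)^{r_j}$, since $q_j(\lambda_j)\neq0$ makes $q_j(D)$ invertible on each $e^{\lambda_j x}\mathbb{C}[x]_{\le n}$. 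Comparing dimensions, $(r_j+k_j+1)-r_j=k_j+1$, shows that $p(D)$ restricted to $F_{r_j}$ is a bijection onto $e^{\lambda_j x}\mathbb{C}[x]_{\le k_j}$, and this target contains $x^{k_j}e^{\lambda_j x}$. So $p(D)y=x^{k_j}e^{\lambda_j x}$ has a unique solution $y_j\in F_{r_j}$.

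Next I would show that the sum $\sum_j F_{r_j}$ is direct. This follows from the linear independence of quasipolynomials with distinct rates, which is the same fact used in the proof of the kernel decomposition theorem. It implies that $p(D)$, acting on $F=\bigoplus_j F_{r_j}$, is injective with image $\bigoplus_j e^{\lambda_j x}\mathbb{C}[x]_{\le k_j}$. The superposition $y_p=\sum_j y_j$ is therefore the unique solution in $F$, just as in the superposition step.

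The real content is minimality: no smaller shift works, so $r_j=m(\lambda_j)$ is forced. Suppose some $r_j<m(\lambda_j)$. By part (2) of the preceding lemma, $F_{r_j}\cap\ker(D-\lambda_j)^{m(\lambda_j)}\neq\{0\}$, so $F$ contains a nonzero element of $\ker p(D)$ and uniqueness fails. Equivalently, $p(D)$ applied to $x^{r_j}e^{\lambda_j x}\mathbb{C}[x]_{\le k_j}$ lowers the degree by $m-r_j$, so the image has dimension below $k_j+1$ and cannot contain $x^{k_j}e^{\lambda_j x}$ together with the lower terms. Either way, the admissible shifts are exactly $r_j\ge m(\lambda_j)$. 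Among the spaces of this prescribed shape, which all have the same dimension $k_j+1$, the choice $r_j=m(\lambda_j)$ is the least shift and gives the lowest possible degrees.

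I expect the main obstacle to be making "minimal" precise, since every admissible $F_{r_j}$ has the same dimension. I would interpret minimality as minimal shift, equivalently minimal polynomial degree, and note that $\dim F=\sum_j(k_j+1)$ is already the least possible. Any space on which $p(D)$ is injective and whose image contains the forcing terms together with everything generated from them under $D$ must have at least this dimension.
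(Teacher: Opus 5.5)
The paper gives no proof of this theorem. It is presented as the combination of the single-term theorem, the superposition theorem, and part (2) of the lemma on $F_r$. Your proposal follows that skeleton, but the mechanism you use at the key step is genuinely different, and it is the one that actually works.

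The paper's single-term proof asserts that $F_{m_0}$ is invariant under $p(D)$, so that injectivity makes $p(D)|_F$ an automorphism of $F$. As you observe, this is false whenever $m_0>0$. For $p(D)=D-1$ and $f=e^x$, the space $F_1=\mathbb{C}\,xe^x$ is mapped onto $\mathbb{C}\,e^x$, so $F_1$ is not invariant and does not contain $f$. The paper's argument therefore only covers the non-resonant case, where no shift is needed. Your splitting $G_j=\ker(D-\lambda_j)^{r_j}\oplus F_{r_j}$, together with the rank count, proves the correct statement: $p(D)$ maps $F_{r_j}$ bijectively onto $e^{\lambda_j x}\mathbb{C}[x]_{\le k_j}$. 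This assembles cleanly across $j$ because $p(D)$ preserves each $e^{\lambda x}\mathbb{C}[x]$.

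You also supply the minimality argument the paper never writes down. For $r_j<m(\lambda_j)$, the degree drop gives $p(D)F_{r_j}\subseteq e^{\lambda_j x}\mathbb{C}[x]_{\le r_j+k_j-m(\lambda_j)}$. The bound $\dim F\ge\dim p(D)F\ge\dim\operatorname{span}\{D^nf\}=\sum_j(k_j+1)$ holds for any $F$ with $p(D)F\supseteq\operatorname{span}\{D^nf\}$, and it makes precise the sense in which your $F$ is smallest. This is the same count as the paper's later equivalence theorem ($\dim F=\deg q(D)$). Your version, however, does not need $F$ to contain $f$ or to be $D$-invariant. The paper's proof of that theorem assumes both, and $F_{m_0}$ violates them.

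There are two small corrections.

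First, when $r_j<m(\lambda_j)$, what fails is existence rather than uniqueness: $x^{k_j}e^{\lambda_j x}\notin p(D)F$. Your degree argument is the one that shows this.

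Second, the claim that the admissible shifts are \emph{exactly} $r_j\ge m(\lambda_j)$ is too strong. For $r_j>m(\lambda_j)$ the intersection with $\ker p(D)$ is still trivial, but $f$ need not lie in $p(D)F$. For instance, with $p(D)=D-1$ and $f=e^{2x}$ one gets $p(D)(xe^{2x})=(1+x)e^{2x}$, so $F_1$ contains no solution. The accurate statement is that $r_j\ge m(\lambda_j)$ is necessary and $r_j=m(\lambda_j)$ is sufficient. Your minimal-shift conclusion uses only these two facts, so nothing else is affected.
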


\subsection{The Annihilator Method}

We now reinterpret the method of undetermined coefficients utilizing the language of quotient spaces.

Let $p(D)y=f$, where $p(D)\in\mathbb{C}[D]$ and $f\in C^\infty(\mathbb{R})$ is a function of exponential-polynomial type.

\begin{Definition}[Annihilator]
Let $f \in C^\infty(\mathbb{R})$. A nonzero polynomial $q(D)\in\mathbb{C}[D]$ is designated an \emph{annihilator} of $f$ if $q(D)f=0$. An annihilator $q(D)$ is of \emph{minimal degree} if for any polynomial $\tilde q(D)\in\mathbb{C}[D]$ satisfying $\tilde q(D)f=0$, one possesses $\deg q(D)\le \deg \tilde q(D)$.
\end{Definition}

Identifying a minimal annihilator $q(D)$ and applying it to both sides of the non-homogeneous equation yields the homogeneous equation:
\[
p(D)q(D)y=0.
\]

Consequently, any particular solution of $p(D)y=f$ must reside within the space $\ker\bigl(p(D)q(D)\bigr)$. By the linearity of the operator, the difference between any two particular solutions is annihilated by $p(D)$, meaning they differ by an element of $\ker p(D)$. This isolates the quotient space:
\[
\faktor{\ker\bigl(p(D)q(D)\bigr)}{\ker\bigl(p(D)\bigr)},
\]
which acts to parametrize particular solutions modulo the homogeneous solutions.

\begin{Proposition}\label{prop:annihilator-dimension}
Let $p(D),q(D)\in\mathbb{C}[D]$ with $q(D)\neq 0$. Then
\[
\dim\left(\faktor{\ker\bigl(p(D)q(D)\bigr)}{\ker\bigl(p(D)\bigr)}\right)
=
\deg q(D).
\]
\end{Proposition}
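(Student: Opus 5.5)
The plan is to reduce the statement to the dimension formula of Remark 3.6, $\dim \ker r(D) = \deg r(D)$ for every non-zero $r(D)\in\mathbb{C}[D]$. The one additional fact needed is the inclusion $\ker p(D)\subseteq \ker\bigl(p(D)q(D)\bigr)$. Once that is in hand, the quotient is a quotient of finite-dimensional spaces, and its dimension is simply the difference of the two dimensions.

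First I would dispose of the degenerate case $p(D)=0$. Then both kernels equal all of $C^\infty(\mathbb{R})$, the quotient is infinite-dimensional, and the formula cannot hold. So I will state explicitly that $p(D)\neq 0$ is assumed, which is implicit in the context of $p(D)y=f$. With $p(D)\neq 0$ and $q(D)\neq 0$, the product $p(D)q(D)$ is non-zero, because $\mathbb{C}[D]$ is an integral domain. Its degree is $\deg p(D)+\deg q(D)$.

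For the inclusion I use commutativity in $\mathbb{C}[D]$. If $p(D)y=0$, then $p(D)q(D)y=q(D)p(D)y=q(D)0=0$. So $\ker p(D)$ is a subspace of $\ker\bigl(p(D)q(D)\bigr)$, and the quotient is well-defined. Remark 3.6 then gives
\[
\dim \ker\bigl(p(D)q(D)\bigr)=\deg p(D)+\deg q(D), \qquad \dim\ker p(D)=\deg p(D).
\]
For a subspace $W$ of a finite-dimensional space $V$ we have $\dim V/W=\dim V-\dim W$, and this yields $\deg q(D)$.

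The main obstacle is only the reliance on Remark 3.6. That remark rests on Corollary 3.5, which needs the factorisation of $p(D)q(D)$ over $\mathbb{C}$. There, the exponents of shared roots add, and $\sum_j m_j=\deg$ by the fundamental theorem of algebra. If a more self-contained argument were wanted, I would instead show directly that $p(D)$ maps $\ker\bigl(p(D)q(D)\bigr)$ onto $\ker q(D)$. Its kernel on that space is $\ker p(D)$, so the first isomorphism theorem gives the quotient isomorphic to $\ker q(D)$, which has dimension $\deg q(D)$. The hard step in this version is surjectivity. Given $g\in\ker q(D)$, one must solve $p(D)y=g$; the undetermined-coefficients theorems of Section 4.1 provide such a $y$ because $g$ is a quasipolynomial. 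Any such $y$ then satisfies $q(D)p(D)y=q(D)g=0$, so it lies in $\ker\bigl(p(D)q(D)\bigr)$. I would present the dimension-count proof as the main argument and mention this isomorphism as the structural reason behind it.
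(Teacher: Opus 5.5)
Your main argument is the paper's proof: apply the dimension formula $\dim\ker r(D)=\deg r(D)$ of Remark~3.6 to $p(D)q(D)$ and to $p(D)$, then use $\dim(V/W)=\dim V-\dim W$. Your explicit check that $\ker p(D)\subseteq\ker\bigl(p(D)q(D)\bigr)$ and your added hypothesis $p(D)\neq 0$ are improvements, since the paper uses both silently and Remark~3.6 only covers non-zero operators. One slip in your degenerate case: if $p(D)=0$, both kernels equal $C^\infty(\mathbb{R})$, so the quotient is $\{0\}$, not infinite-dimensional; the formula therefore fails whenever $\deg q(D)\ge 1$ because the left-hand side is $0$.
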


\begin{proof}
By the dimension property established in Section 3.3, the dimension of the kernel of any constant-coefficient operator equals its algebraic degree. Thus, $\dim \ker\bigl(p(D)q(D)\bigr) = \deg\bigl(p(D)q(D)\bigr) = \deg p(D) + \deg q(D)$. The dimension of the quotient space then follows from the standard linear algebra identity $\dim(V/W)=\dim V-\dim W$, yielding $\deg q(D)$.
\end{proof}

\begin{Theorem}[Algebraic Equivalence and Dimension]\label{thm:equivalence}
Let $f \in M_{\text{tors}}$ have a minimal annihilator $q(D)$. Let $F$ be a minimal trial space for $p(D)y=f$ satisfying the four guiding principles of this section, and let
\[
V_q = \faktor{\ker\bigl(p(D)q(D)\bigr)}{\ker\bigl(p(D)\bigr)}
\]
be the quotient space defined by the Annihilator Method. Then the natural quotient projection restricted to $F$, 
\[
\left.\pi\right|_F : F \to V_q,
\]
is a canonical isomorphism. Consequently, $\dim F = \deg q(D)$, proving the two methods are mathematically equivalent.
\end{Theorem}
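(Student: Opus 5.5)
The plan is to prove that $\left.\pi\right|_F$ is a well-defined linear map, then that it is injective, and finally that the dimensions match, so that bijectivity follows from finite-dimensional linear algebra.

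\textbf{Well-definedness.} First I will show that $F \subseteq \ker\bigl(p(D)q(D)\bigr)$. Since $f\in M_{\text{tors}}$, I may write $f=\sum_j x^{k_j}e^{\lambda_j x}$ with pairwise distinct rates $\lambda_j$, grouped by maximal degree as in Step 1. The minimal annihilator is then
\[
q(D)=\prod_j (D-\lambda_j)^{k_j+1}.
\]
Its minimality follows from Corollary 3.5: any annihilator must contain each factor $(D-\lambda_j)^{k_j+1}$, because $x^{k_j}e^{\lambda_j x}\notin\ker(D-\lambda_j)^{k_j}$. By the last theorem of the previous subsection, the minimal trial space is
\[
F=\bigoplus_j x^{r_j}e^{\lambda_j x}\mathbb{C}[x]_{\le k_j}, \qquad r_j=m(\lambda_j).
\]
Each summand consists of functions $x^{s}e^{\lambda_j x}$ with $s\le m(\lambda_j)+k_j$. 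The factor $(D-\lambda_j)^{m(\lambda_j)+k_j+1}$ divides $p(D)q(D)$, so each summand lies in $\ker\bigl(p(D)q(D)\bigr)$ by Corollary 3.3. Alternatively, one can argue abstractly: $F$ is invariant under $p(D)$ and $p(D)|_F$ is onto $F$. Every $y\in F$ satisfies $p(D)y=g$ with $g\in F$, and I would then need $q(D)g=0$. This does hold for $F$ built from the $D$-orbits of the $f_j$, which is why I will use the explicit description.

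\textbf{Injectivity.} The kernel of $\left.\pi\right|_F$ is $F\cap\ker p(D)$, which is $\{0\}$ by guiding principle (4). Principle (4) was already verified for this $F$ in the proof of the existence theorem, via Lemma 4.2 and the direct sum decomposition of Corollary 3.5.

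\textbf{Surjectivity by dimension count.} I have
\[
\dim F=\sum_j (k_j+1)=\deg q(D),
\]
and Proposition~\ref{prop:annihilator-dimension} gives $\dim V_q=\deg q(D)$. An injective linear map between spaces of equal finite dimension is an isomorphism, which yields both the isomorphism and the equality $\dim F=\deg q(D)$.

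The map is canonical because it is simply the restriction of the quotient projection, with no choice of basis. I will also remark that it sends the unique solution $y_p\in F$ to the coset of particular solutions inside $\ker(pq)$. To see this, note that if $y\in\ker(pq)$ solves $p(D)y=f$, then its coset meets $F$ exactly in $y_p$.

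\textbf{Main obstacle.} The hardest step is the identification of the minimal annihilator $q(D)$ and the resulting count $\dim F=\deg q(D)$. This rests on the exact form of $q$ for a grouped quasipolynomial. I will prove it by applying $\operatorname{Ann}(f)=\bigcap_j\operatorname{Ann}(x^{k_j}e^{\lambda_j x})$. This identity holds because an annihilator of $f$ must annihilate each component, by the uniqueness of the decomposition in Corollary 3.5 combined with the invariance of each generalized eigenspace under $\mathbb{C}[D]$. Each component annihilator is the ideal $\bigl((D-\lambda_j)^{k_j+1}\bigr)$ by the PID property, and these generators are coprime, so their intersection is generated by their product.
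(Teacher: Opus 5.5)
Your proof is correct, but it obtains the two key facts, $F\subseteq\ker\bigl(p(D)q(D)\bigr)$ and $\dim F=\deg q(D)$, by a different route from the paper.

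The paper argues abstractly from the guiding principles:
\begin{enumerate}
\item $D$-invariance and $f\in F$ place the cyclic space $\operatorname{span}\{f,Df,\dots,D^{k-1}f\}$ inside $F$, and minimality of $q$ gives it dimension $k=\deg q(D)$.
\item Minimality of $F$ makes $F$ equal to that cyclic space, so $q(D)F=\{0\}$.
\item Injectivity from principle (4) gives $\dim F\le\dim V_q=\deg q(D)$, which closes the squeeze.
\end{enumerate}
You instead compute $q(D)=\prod_j(D-\lambda_j)^{k_j+1}$ explicitly and take $F$ from the last theorem of Section~4.1. You then check membership in $\ker(pq)$ by the multiplicity count $m(\lambda_j)+k_j+1$ and count $\dim F$ directly.

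The paper's route needs neither the explicit annihilator nor that theorem, which the paper states without proof. Its hypotheses are restrictive, though. Principle (1) and solvability in $F$ force $f=p(D)y\in F$. Principle (2) then requires $p(D)$ to be injective on $\operatorname{span}\{D^nf\}\cong\mathbb{C}[D]/(q)$, which means $\gcd(p,q)=1$. So the paper's argument covers only the non-resonant case. There $F=\operatorname{span}\{D^nf\}$, which is your space with every $r_j=0$.

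Your argument uses only principle (4). It therefore also proves the isomorphism for the shifted resonant spaces $F_{m_0}$ that the paper's subsequent Remark relies on. For instance, with $p(D)=q(D)=D-1$ and $F=\operatorname{span}\{xe^x\}$, we have $f\notin F$ and $q(D)F\neq\{0\}$, yet $F\subseteq\ker(D-1)^2$. For the same reason, drop your ``abstract'' alternative: the shifted $F$ is not $p(D)$-invariant when some $m(\lambda_j)>0$.

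Two small repairs are needed:
\begin{enumerate}
\item Elements of $M_{\text{tors}}$ have components $P_j(x)e^{\lambda_jx}$ with arbitrary polynomials $P_j$, not just monomials. Nothing in your argument changes, since $\operatorname{span}\{D^n(P_je^{\lambda_jx})\}=e^{\lambda_jx}\mathbb{C}[x]_{\le\deg P_j}$.
\item The existence theorem verified principle (4) only for a single rate, not for the multi-rate sum. Add the one-line argument: decompose $y\in F\cap\ker p(D)$ into rate components, and note that each lies in $x^{m(\lambda_j)}e^{\lambda_jx}\mathbb{C}[x]\cap\ker(D-\lambda_j)^{m(\lambda_j)}=\{0\}$.
\end{enumerate}
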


\begin{proof}
Because $F$ is $D$-invariant and contains $f$, it contains the sequence \[f,\,Df,\,D^2f,\dots,D^{k-1}f,\] where $k=\deg q(D)$. Minimality of $q(D)$ implies these specific functions are linearly independent; otherwise, a polynomial of smaller degree would annihilate $f$. Hence, $\dim F\ge \deg q(D)$.

Furthermore, because $F$ is generated by $f$ and its derivatives, applying the minimal annihilator to the entire space yields $q(D)F = \{0\}$. This implies that $F \subset \ker\bigl(q(D)\bigr)$, and consequently, $F \subset \ker\bigl(p(D)q(D)\bigr)$.

Consider the natural quotient projection map $\pi : \ker\bigl(p(D)q(D)\bigr) \to V_q$. By the foundational definition of the trial space (Condition 4), $F \cap \ker\bigl(p(D)\bigr) = \{0\}$. Because the kernel of the restricted projection $\left.\pi\right|_F$ is exactly this intersection, $\left.\pi\right|_F : F \to V_q$ is an injective linear map.

Because $\left.\pi\right|_F$ injects $F$ into $V_q$, we must have $\dim F \le \dim V_q$. By Proposition~\ref{prop:annihilator-dimension}, $\dim V_q = \deg q(D)$. 

Combining our inequalities yields $\deg q(D) \le \dim F \le \deg q(D)$, forcing $\dim F = \dim V_q = \deg q(D)$. Since $\left.\pi\right|_F$ is an injective linear map between finite-dimensional vector spaces of the same dimension, it is necessarily a bijection (an isomorphism).

Consequently, the trial space $F$ acts as an internal complement (a transversal) to $\ker\bigl(p(D)\bigr)$ within $\ker\bigl(p(D)q(D)\bigr)$. Searching for the particular solution by restricting the operator to the trial space $F$ (the Method of Undetermined Coefficients) is algebraically equivalent to selecting the unique affine coset in the quotient space $V_q$ (the Annihilator Method).
\end{proof}

\begin{Remark}
This isomorphism elegantly grounds the abstract algebra in concrete dimensions. For the forcing function $f(x) = x^{k_0} e^{\mu x}$, the Method of Undetermined Coefficients constructs the trial space $F_{m_0}$, which has dimension $k_0+1$. Concurrently, the Annihilator Method identifies the minimal annihilator $q(D) = (D-\mu)^{k_0+1}$, producing a quotient space $V_q$ of dimension $\deg q(D) = k_0+1$. The theorem guarantees that searching for coefficients within the $(k_0+1)$-dimensional vector space $F_{m_0}$ is mathematically indistinguishable from isolating the unique affine coset within the $(k_0+1)$-dimensional quotient space $V_q$.
\end{Remark}

To conclude this section, we observe that the minimal annihilators encode the procedural rules of classical solution methodologies.

\begin{Lemma}[Minimal Annihilators of Standard Functions]\label{lem:min-annihilators}
The following algebraic statements hold:
\begin{enumerate}
\item If $f(x)$ is a polynomial of degree $n$, a minimal annihilator is $q(D)=D^{n+1}$.
\item If $f(x)=p(x)e^{\lambda x}$, where $p(x)$ has degree $n$, a minimal annihilator is $q(D)=(D-\lambda)^{n+1}$.
\item If $f(x)=\sin(\alpha x)$ or $f(x)=\cos(\alpha x)$, a minimal annihilator is $q(D)=D^2+\alpha^2$.
\item If $f(x)=p(x)e^{\lambda x}\sin(\alpha x)$ or $f(x)=p(x)e^{\lambda x}\cos(\alpha x)$, where $p(x)$ has degree $n$, a minimal annihilator is $q(D)=\bigl((D-\lambda)^2+\alpha^2\bigr)^{n+1}$.
\item If $p(D)$ minimally annihilates $f$ and $q(D)$ minimally annihilates $g$, then a minimal annihilator for the sum $f+g$ is the least common multiple:
\[
\operatorname{lcm}\bigl(p(D),q(D)\bigr).
\]
\end{enumerate}
\end{Lemma}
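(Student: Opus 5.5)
The plan is to treat every item as an instance of one structural fact. For $f \in M_{\text{tors}}$, the set $\operatorname{Ann}_{\mathbb{C}[D]}(f)$ is a nonzero ideal of the PID $\mathbb{C}[D]$, so it is generated by a unique monic polynomial $q_f$. That generator is exactly the minimal annihilator. Indeed, any annihilator $\tilde q$ lies in $(q_f)$, so $q_f \mid \tilde q$ and $\deg q_f \le \deg \tilde q$.

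Each claim then becomes a divisibility statement. We show that the proposed $q$ annihilates $f$, and that no proper divisor of $q$ does. This reduces to testing the maximal proper divisors $q/\ell$, where $\ell$ ranges over the linear factors of $q$.

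\textbf{Items (1) and (2).} Item (1) is the case $\lambda=0$ of item (2), so we prove (2). Since $p(x)e^{\lambda x}$ is $T_\lambda p$ with $\deg p = n$, Corollary 3.3 places $f$ in $\ker(D-\lambda)^{n+1}$. Every divisor of $(D-\lambda)^{n+1}$ divides $(D-\lambda)^{n}$. By the conjugation identity of Lemma 3.2,
\[
(D-\lambda)^n f = T_\lambda D^n p = n!\,a_n e^{\lambda x} \neq 0,
\]
where $a_n$ is the leading coefficient of $p$. Hence $q_f=(D-\lambda)^{n+1}$.

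\textbf{Items (3) and (4).} Over $\mathbb{C}$, $(D-\lambda)^2+\alpha^2 = (D-\lambda-i\alpha)(D-\lambda+i\alpha)$. Write
\[
p(x)e^{\lambda x}\cos(\alpha x) = \tfrac12 p(x)e^{(\lambda+i\alpha)x} + \tfrac12 p(x)e^{(\lambda-i\alpha)x},
\]
and similarly for $\sin$. By (2), the two summands have minimal annihilators $(D-\lambda-i\alpha)^{n+1}$ and $(D-\lambda+i\alpha)^{n+1}$. These are coprime when $\alpha\neq 0$, which we assume; if $\alpha = 0$ the $\sin$ case degenerates. So (4) follows from (5), and (3) is the case $\lambda=0$, $n=0$.

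\textbf{Item (5).} Set $L=\operatorname{lcm}(p,q)$. Since $p \mid L$ and $q \mid L$, $L$ annihilates $f+g$. The converse inclusion $\operatorname{Ann}(f+g)\supseteq (L)$ is the easy half.

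The main obstacle is showing that $L$ is actually minimal, i.e.\ that $q_{f+g}=L$. This is false without a hypothesis. For instance, $g=-f$ gives $f+g=0$, whose minimal annihilator is $1$, not $\operatorname{lcm}(p,p)=p$. I would therefore prove (5) under the assumption $\gcd(p,q)=1$, which is exactly what (4) needs, and note that in general one only gets $q_{f+g}\mid L$.

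Under coprimality, suppose $r(f+g)=0$. Apply $q$ to get $rq f = 0$, since $qg=0$. Then $p \mid rq$, and coprimality gives $p\mid r$. Symmetrically $q \mid r$, so $pq=L$ divides $r$.

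For the decomposition of $f$ in (3) and (4), the coprime kernels intersect trivially by Theorem 3.4. This confirms that neither summand can cancel the other, so the restricted version of (5) applies.
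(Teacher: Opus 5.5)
Your proof is correct, and it takes a different and more careful route than the paper.

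For (1)--(4), the paper only cites ``repeated differentiation and standard identities'' such as $(D-\lambda)e^{\lambda x}=0$. That shows the proposed $q(D)$ annihilates $f$, but it never addresses minimality. Your argument supplies that missing half. Since $\operatorname{Ann}(f)$ is principal, only the maximal proper divisors $q/\ell$ need testing, and the computation $(D-\lambda)^n\bigl(p(x)e^{\lambda x}\bigr)=n!\,a_n e^{\lambda x}\neq 0$ settles them. Deriving (3)--(4) from (2) through $e^{(\lambda\pm i\alpha)x}$ also avoids differentiating trigonometric products.

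For (5), the paper asserts that any $r(D)$ killing $f+g$ ``must annihilate both individually,'' and hence lies in $(p)\cap(q)$. That step is false in general, and your example $g=-f$ refutes (5) as stated. A nonzero-sum example also works: take $f=e^{x}+e^{2x}$ and $g=-e^{2x}$. Then $f+g=e^{x}$ is killed by $D-1$, although $\operatorname{lcm}=(D-1)(D-2)$.

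Your coprimality hypothesis is exactly what rescues the paper's step. From $r(f+g)=0$ one gets $rf=-rg\in\ker p\cap\ker q=\ker\gcd(p,q)=\{0\}$. This is what your last paragraph is gesturing at, and it gives a second proof alongside your divisibility argument. It is also all that (4) needs.

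Two minor slips. In (2), you mean every \emph{proper} divisor of $(D-\lambda)^{n+1}$ divides $(D-\lambda)^n$. Also, at $\alpha=0$ the cosine cases fail as well: for example, $\cos(0\cdot x)=1$ is minimally annihilated by $D$, not $D^2$. So $\alpha\neq0$ is needed in both (3) and (4). Your blanket assumption covers this, but the remark that only the sine case degenerates understates it.
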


\begin{proof}
Statements (1) through (4) follow iteratively from repeated differentiation and standard identities such as $(D-\lambda)e^{\lambda x}=0$ and $(D^2+\alpha^2)\sin(\alpha x)=0$.

For statement (5), if an operator $r(D)$ annihilates $f+g$, it must annihilate both individually, and hence must constitute a common multiple of $p(D)$ and $q(D)$. Because $\mathbb{C}[D]$ is a principal ideal domain, the common multiples form the ideal generated by $\operatorname{lcm}(p(D),q(D))$, which uniquely determines the minimal degree.
\end{proof}

\begin{Remark}
Lemma~\ref{lem:min-annihilators} explains the classical ``guessing rules'' for undetermined coefficients. Each additional factor in the operator $\operatorname{lcm}(p(D),q(D))$ mathematically necessitates an increase in the dimension of the required trial space, which physically manifests as the multiplication by powers of $x$ to resolve resonance.
\end{Remark}

\section{Extension to Variable Coefficients: Ring Isomorphisms}

The algebraic framework articulated thus far appears constrained to constant coefficients. However, by substituting the generating operator, one can resolve broad classes of variable-coefficient equations.

\subsection{The Euler Operator}
Using the standard dilation operator $D=\frac{d}{dx}$, a Cauchy-Euler equation can be expressed in terms of the operator $\theta = xD$. 
The fundamental identity is $(xD)^2y = xD(xy') = xy' + x^2y''$. Therefore, the standard second-order Euler equation $ax^2y''+bxy'+cy=0$ can be formulated as
\[
\bigl(a\theta^2+(b-a)\theta+c\bigr)y=0.
\]
To determine the eigenfunctions of $\theta$, we solve the equation $\theta(f(x)) = m f(x)$, or equivalently, $x f'(x) = m f(x)$. Up to a constant multiple, the unique solution is $f(x) = x^m$. Thus, $x^m$ acts as the universal eigenfunction of $\theta$ with eigenvalue $m$. Consequently, $p(\theta)x^m=p(m)x^m$, and the differential equation reduces to the algebraic roots of $p(m)=0$.

\begin{Example}[Algebraic Reduction of an Euler Equation]
Consider the second-order variable-coefficient equation:
\[
x^2 y'' - 3xy' + 3y = 0.
\]
Utilizing the operator $\theta = xD$, we substitute $x^2 y'' = \theta(\theta-1)y$ and $xy' = \theta y$ to obtain:
\[
(\theta^2 - \theta - 3\theta + 3)y = (\theta^2 - 4\theta + 3)y = (\theta-1)(\theta-3)y = 0.
\]
The algebraic roots are $m=1$ and $m=3$. Because the basis eigenfunctions of the operator are $x^m$, the general solution is deduced as $y = c_1 x + c_2 x^3$.
\end{Example}

\subsection{Generalized Operators and $\mathbb{C}[D_t]$}
The algebraic success of the Euler operator $\theta = x \frac{d}{dx}$ is not a coincidence; it is the manifestation of a strict ring isomorphism. The substitution $t = \ln x$ transforms the variable-coefficient operator $\theta$ exactly into the constant-coefficient derivative $D_t = \frac{d}{dt}$, which maps the continuous eigenfunctions perfectly as $x^m = e^{mt}$.

This mechanism generalizes to a broader class of operators. Let $\Theta = f(x)\frac{d}{dx}$. We seek a change of variables $t = g(x)$ that transforms $\Theta$ into the standard constant-coefficient derivative $D_t = \frac{d}{dt}$. The chain rule dictates:
\[
\Theta = f(x)\frac{dt}{dx}\frac{d}{dt}.
\]
For $\Theta = \frac{d}{dt}$, we must have $f(x)\frac{dt}{dx} = 1$, which yields the integral substitution:
\[
t = \int \frac{1}{f(x)}\,dx.
\]

\begin{figure}[htbp]
\centering
\begin{tikzpicture}[node distance=3.5cm, auto, >=stealth, thick]
    \node (A) {$C^\infty(\mathbb{R}_x)$};
    \node (B) [right of=A, node distance=4.5cm] {$C^\infty(\mathbb{R}_x)$};
    \node (C) [below of=A] {$C^\infty(\mathbb{R}_t)$};
    \node (D) [right of=C, node distance=4.5cm] {$C^\infty(\mathbb{R}_t)$};

    \draw[->] (A) to node {$p(\Theta)$} (B);
    \draw[->] (C) to node [swap] {$p(D_t)$} (D);

    \draw[->, blue] (A) to node [swap] {$t = g(x)$} (C);
    \draw[->, blue] (B) to node {$t = g(x)$} (D);
    
    \node[align=center, blue, font=\small] at (2.25, -0.6) {Ring Isomorphism \\ $\Theta \mapsto D_t$};
\end{tikzpicture}
\caption{The commutative diagram illustrating the structural ring isomorphism. The variable-coefficient operator $p(\Theta)$ acting on functions of $x$ is isomorphic to the constant-coefficient operator $p(D_t)$ acting on functions of $t$ via the substitution $t = g(x)$.}
\label{fig:isomorphism}
\end{figure}

This substantiates a structural theorem: Any variable-coefficient differential equation that can be formulated as a polynomial $p(\Theta)y = 0$ is isomorphic to a constant-coefficient equation $p(D_t)y = 0$. For instance, selecting $f(x) = x$ yields the standard Euler substitution $t = \ln x$, while selecting $f(x) = ax+b$ generates Legendre's linear equation.

\subsection{Reduction of Order and Non-Commutative Factorization}
A ubiquitous technique in classical differential equations is d'Alembert's ``reduction of order.'' Given a second-order variable-coefficient operator $L = D^2 + p(x)D + q(x)$ and a known basis solution $y_1 \in \ker L$, students are traditionally instructed to seek a second solution of the form $y_2 = v(x)y_1$. Algebraically, this equates to searching for the remainder of the kernel within the ideal $(y_1)$ generated by the first solution. 

This substitution is not an analytic trick; it is the manifestation of algebraic factorization within the non-commutative ring of variable-coefficient operators, $\mathbb{C}(x)[D]$. 

Despite the non-commutativity of this ring ($D \circ x \neq x \circ D$), identifying a solution $y_1$ is algebraically equivalent to identifying a linear right-factor of the operator. Specifically, the first-order operator that minimally annihilates $y_1$ is precisely $(D - \frac{y_1'}{y_1})$. Because $y_1 \in \ker L$, the full second-order operator must factor as:
\[
L = \left(D + p(x) + \frac{y_1'}{y_1}\right) \circ \left(D - \frac{y_1'}{y_1}\right).
\]
By restricting our search to the ideal $y \in (y_1)$, we set $y = v y_1$. The inner operator acts on this ideal via the product rule, yielding:
\[
\left(D - \frac{y_1'}{y_1}\right)(v y_1) = v'y_1 + v y_1' - v y_1' = v'y_1.
\]
The inner operator annihilates the zeroth-order $v$ term, leaving only the derivative $v'$. Consequently, the full equation $L[vy_1] = 0$ collapses into a first-order equation in the variable $w = v'$. 

\begin{Remark}[Differential Galois Theory]
This algebraic reduction is the differential analogue of classical Galois theory. Just as finding a root $r_1$ of a polynomial allows one to divide out the factor $(x - r_1)$ to lower the polynomial's degree, finding a solution $y_1$ allows one to factor out the operator $(D - \frac{y_1'}{y_1})$ to lower the differential equation's order. This structural parallel forms the foundational entry point into Picard-Vessiot theory \cite{VanDerPutSinger}.
\end{Remark}

\section{The Boundary of Linear Theory: Diffeomorphic Linearization}

The algebraic framework developed in this article relies on the premise that the differential equations are linear, ensuring that the solution spaces form valid $\mathbb{C}[D]$-modules. When a differential equation is non-linear, superposition fails, the solution space loses its vector space structure, and the differential operator can no longer be factored within a polynomial ring. 

However, there exists an algebraic mechanism for rescuing specific non-linear equations by mapping them back into the linear framework. We employ a process known as \emph{diffeomorphic linearization}. This involves applying a smooth, invertible coordinate transformation (a diffeomorphism) $\Phi$ to the dependent variable, thereby mapping a curved, non-linear solution manifold\footnote{Formally, $C^\infty(\mathbb{R})$ constitutes an infinite-dimensional Fréchet space, and the solution sets form submanifolds within it. For the scope of this algebraic framework, we employ the term \emph{manifold} in a descriptive geometric sense to contrast these curved solution sets with flat linear vector modules.} into an image space that inherits the structure of a linear vector space, allowing it to function as a valid $\mathbb{C}[D]$-module.

\subsection{The Bernoulli Equation and Space Flattening}
A historical example of this technique is the Bernoulli differential equation. To illustrate its connection to our established ring theory, consider the constant-coefficient case:
\[
\frac{dy}{dx} + ay = by^r, \qquad r \neq 0, 1.
\]
Because of the $y^r$ term, the operator $L[y] = y' + ay - by^r$ is not a module endomorphism. The solution space is not a linear kernel, but rather a non-linear manifold $Y \subset C^\infty(\mathbb{R})$. 

To resolve this algebraically, we seek an isomorphism of the function space itself that flattens this non-linear manifold back into a linear vector space. Consider the algebraic transformation of the dependent variable:
\[
\Phi : y \mapsto u = y^{1-r}.
\]
Applying the chain rule, $\frac{du}{dx} = (1-r)y^{-r}\frac{dy}{dx}$. Multiplying the original Bernoulli equation by $(1-r)y^{-r}$ transforms the equation into:
\[
\frac{du}{dx} + (1-r)au = (1-r)b.
\]
This resulting equation is linear. More importantly, because we restricted our initial equation to constant coefficients, the diffeomorphism $\Phi$ has mapped the problem directly back into our foundational polynomial ring. The image space $U = \Phi(Y)$ now functions as our $\mathbb{C}[D]$-module, governed by the linear operator:
\[
L_u = D + (1-r)a \in \mathbb{C}[D].
\]
We can now apply the exact algebraic machinery developed in Section 4. The forcing term is the constant $f(x) = (1-r)b$. By Lemma~\ref{lem:min-annihilators}, the minimal annihilator for a constant is $D$. Applying this annihilator yields the homogeneous operator $D\bigl(D + (1-r)a\bigr)u = 0$. The roots of the characteristic polynomial dictate that the solution must reside in the space $\operatorname{span}_{\mathbb{C}}\{1, e^{-(1-r)ax}\}$. 

\begin{figure}[htbp]
\centering
\begin{tikzpicture}[node distance=4cm, auto, >=stealth, thick]
    \filldraw[fill=blue!5, draw=blue!60, rounded corners=15pt] (-5,-1.5) rectangle (-0.5,1.5);
    \node[blue!80!black] at (-2.75, 1) {\textbf{Non-linear Manifold} $Y$};
    \node[align=center] at (-2.75, 0) {$y' + ay = by^r$ \\ (No Module Structure)};

    \filldraw[fill=red!5, draw=red!60] (3,-1.5) -- (7.5,-1.5) -- (8,1.5) -- (3.5,1.5) -- cycle;
    \node[red!80!black] at (5.75, 1) {\textbf{Linear Module} $U = \Phi(Y)$};
    \node[align=center] at (5.75, 0) {$u' + (1-r)au = (1-r)b$ \\ (Valid $\mathbb{C}[D]$-Module)};

    \draw[->, color=black!80] (-0.5, 0.5) to[bend left=20] node[above] {\footnotesize{Push-forward: $\Phi(y) = y^{1-r}$}} (3.25, 0.5);
    \draw[->, color=black!80] (3.25, -0.5) to[bend left=20] node[below] {\footnotesize{Pull-back: $\Phi^{-1}(u) = u^{\frac{1}{1-r}}$}} (-0.5, -0.5);
\end{tikzpicture}
\caption{The geometry of diffeomorphic linearization. The non-linear transformation $\Phi$ pushes the curved solution manifold $Y$ into the ambient function space, creating an image space $U = \Phi(Y)$ that acts as a valid linear $\mathbb{C}[D]$-module. The equation is solved entirely within this linear module, and the resulting affine solution coset is pulled back to the original manifold via the inverse mapping $\Phi^{-1}$.}
\label{fig:push_pull}
\end{figure}

Once the affine coset of solutions is identified within this linear $\mathbb{C}[D]$-module, the solutions are pulled back to the original non-linear manifold via the inverse mapping $\Phi^{-1}(u) = u^{\frac{1}{1-r}}$, allowing us to solve the non-linear equation via finite-dimensional linear algebra.

\begin{Remark}[Diffeomorphisms vs. Ring Isomorphisms]
It is conceptually crucial to distinguish this transformation from the Euler substitution discussed in Section 5. The Euler substitution $t = \ln x$ is a change of the \emph{independent} variable, which induces a \textbf{ring isomorphism} on the differential operators ($\Theta \mapsto D_t$), mapping a variable-coefficient ring to a constant-coefficient ring. Conversely, the Bernoulli substitution $u = y^{1-r}$ is a change of the \emph{dependent} variable. It acts as a \textbf{diffeomorphism} on the function space itself, mapping a non-linear solution manifold directly into an image space $\Phi(Y)$ that acts as a valid $\mathbb{C}[D]$-module, where standard operator algebra can execute the solution.
\end{Remark}

\subsection{The Riccati Equation and Dimensional Elevation}
A second notable example of diffeomorphic linearization is the Riccati equation. While the Bernoulli substitution preserves the order of the differential equation, the Riccati transformation absorbs a quadratic non-linearity by algebraically elevating the dimension of the target linear module. 

Consider the constant-coefficient Riccati equation, characterized by its quadratic non-linearity:
\[
\frac{dy}{dx} = a + by + cy^2, \qquad c \neq 0.
\]
Because of the $y^2$ term, this equation resides on a non-linear manifold $Y$ outside our linear module framework. To linearize this space, we can bootstrap a substitution directly from the structure of the non-linearity. Isolating the purely non-linear core of the equation, $y' \approx cy^2$, algebraic rearrangement yields $y \approx \frac{y'}{cy}$. This self-referential form naturally suggests decoupling the numerator and denominator by introducing a new auxiliary function $u$, motivating a substitution of the form $y \propto \frac{u'}{cu}$. 

The quotient rule provides the mechanical confirmation of this intuition: evaluating the derivative of the base guess $y = \frac{u'}{cu}$ yields $y' = \frac{u''}{cu} - c\left(\frac{u'}{cu}\right)^2$, which simplifies exactly to $y' = \frac{u''}{cu} - cy^2$. To ensure this negative squared term cancels the positive $+cy^2$ non-linearity in the original equation, we negate the substitution. This defines a diffeomorphism $\Phi$, formally expressed by $u = \exp\bigl(-c \int y \, dx\bigr)$, but most practically implemented via its inverse substitution:
\[
\Phi(y) = u \implies y = -\frac{1}{c} \frac{u'}{u}.
\]
Applying the quotient rule to differentiate this substitution yields:
\[
\frac{dy}{dx} = -\frac{1}{c} \left( \frac{u''u - (u')^2}{u^2} \right) = -\frac{u''}{cu} + \frac{(u')^2}{cu^2}.
\]
Substituting both $y$ and $y'$ back into the original Riccati equation, we obtain:
\[
-\frac{u''}{cu} + \frac{(u')^2}{cu^2} = a - \frac{b}{c}\frac{u'}{u} + c\left(-\frac{u'}{cu}\right)^2.
\]
The non-linear $(u')^2$ term generated by the quotient rule cancels the quadratic non-linearity on the right-hand side. Multiplying the entire equation by $-cu$ linearizes the relationship, yielding:
\[
u'' - bu' + acu = 0.
\]
This algebraic transformation is instructive: it has mapped a first-order non-linear manifold $Y$ into an image space $U = \Phi(Y)$ that operates as a second-order linear vector space. Once again, by restricting our analysis to constant coefficients, the target module is placed back into our foundational polynomial ring. The governing operator acting on $\Phi(Y)$ is now:
\[
L_u = D^2 - bD + ac \in \mathbb{C}[D].
\]
We can readily construct the basis for this zero-eigenspace by finding the roots of the characteristic polynomial $\lambda^2 - b\lambda + ac = 0$. Once the linear basis for $u(x)$ is established within the $\mathbb{C}[D]$-module, the solution is pulled back to the original non-linear manifold via the inverse relation $y(x) = -\frac{1}{c} \frac{u'(x)}{u(x)}$. 

Together, the Bernoulli and Riccati equations establish a boundary principle for our algebraic framework. When classical operator theory fails due to non-linearity, it can frequently be restored by a diffeomorphic linearization that maps the curved geometry into an image space $\Phi(Y)$, either flattening the space (Bernoulli) or elevating its linear dimension (Riccati) to inherit the standard structure of a $\mathbb{C}[D]$-module.

\section{The Algebraic Breakdown and Integral Inverses}

\subsection{Torsion-Free Forcing and the Limits of Algebra}
It is crucial to recognize that the Method of Undetermined Coefficients and the Annihilator Method are not universally applicable to all continuous functions. They are bound to the algebraic constraints of the torsion submodule $M_{\text{tors}}$. Both methods fail if the forcing function $f(x)$ is \emph{torsion-free}.

For the Annihilator Method, the failure is definitional: if $f(x) \notin M_{\text{tors}}$, then by definition, no polynomial operator $q(D) \in \mathbb{C}[D]$ exists such that $q(D)f = 0$. Consequently, one cannot construct the homogeneous equation $q(D)p(D)y = 0$, and the method collapses.

For the Method of Undetermined Coefficients, the failure is structural. The foundational requirement of a trial space $F$ is that it must be invariant under the derivative operator $D$. Therefore, the trial space must span $f(x)$ and all of its linearly independent derivatives. If $f(x) \in M_{\text{tors}}$ (a quasipolynomial), the sequence of derivatives will eventually loop or terminate, generating a finite-dimensional trial space. This allows the restriction $\left.p(D)\right|_F$ to be expressed as a finite matrix, rendering the coefficients solvable via standard linear algebra. 

However, if $f(x)$ is torsion-free (e.g., $\ln(x)$ or $\tan(x)$), repeated differentiation generates an infinite sequence of linearly independent functions. The required trial space becomes infinite-dimensional. Attempting to balance coefficients transforms from a finite matrix inversion into an infinite-dimensional algebraic system.

\begin{figure}[htbp]
\centering
\begin{tikzpicture}[scale=1]
    \draw[thick, fill=gray!5] (-4,-3) rectangle (4,3);
    \node[anchor=north west, font=\bfseries] at (-4,3) {$C^\infty(\mathbb{R})$};

    \filldraw[thick, fill=blue!10, draw=blue!80] (0, 0.3) ellipse (3cm and 1.8cm);
    \node[blue!80!black, font=\bfseries] at (0, 1.35) {Torsion Submodule $M_{\text{tors}}$};
    \node[align=center, font=\footnotesize] at (0, 0.2) {
        $x^k e^{\lambda x}$, $\sin(\alpha x)$, polynomials\\[0.15cm]
        \textbf{Algebraic Methods Succeed:}\\
        Finite-dimensional trial spaces
    };

    \node[align=center, font=\footnotesize] at (0, -2.1) {
        \textbf{Torsion-Free (Algebra Fails):}\\
        Infinite-dimensional trial spaces. Must return to Analysis\\
        (Variation of Parameters / Integration).
    };
\end{tikzpicture}
\caption{The algebraic classification of $C^\infty(\mathbb{R})$ relative to the ring $\mathbb{C}[D]$. The Method of Undetermined Coefficients and the Annihilator Method are operational only when the forcing function resides within the torsion submodule $M_{\text{tors}}$. If the function escapes this finite-dimensional operator algebra, the structural constraints fail, forcing a return to continuous analysis.}
\label{fig:torsion_venn}
\end{figure}

Ultimately, when a forcing function lies outside the torsion submodule, it escapes the finite-dimensional operator algebra of the polynomial ring $\mathbb{C}[D]$. In these instances, the discrete algebraic framework must be extended to continuous analysis---specifically, explicit integral inverses---to resolve the non-homogeneous equation.

\subsection{Mikusi\'nski's Calculus and Green's Functions}
Mikusi\'nski's operational calculus \cite{Mikusinski} formalizes this operator inversion by constructing a field of fractions $\mathcal{F} = \operatorname{Frac}(\mathbb{C}[D])$. In this operational field, the formal solution to $p(D)y = f$ is the algebraic quotient $y = \frac{1}{p(D)} \cdot f$.

To map this abstract fraction back into the domain of smooth functions, we recognize that multiplication in $\mathcal{F}$ corresponds to the analytic operation of \emph{convolution} ($*$), and the multiplicative identity $1$ corresponds to the Dirac delta distribution $\delta$. Finding the algebraic inverse $\frac{1}{p(D)}$ equates to identifying an element $G$ such that $p(D) * G = \delta$. This $G$ constitutes the \emph{Green's function} (or fundamental solution) of the corresponding operator. 

\subsection{Constructing Integral Transforms: Kernels and Eigenfunctions}
While convolution supplies a time-domain inverse, integral transforms provide a direct mapping into an algebraic frequency domain. Every linear operator that maps a function domain ($x$) to a frequency domain ($s$) takes the form of an integral transform:
\[
T\{f(x)\}(s) = \int_a^b K(x, s) f(x) \,dx.
\]
The fundamental mechanism for constructing a transform that diagonalizes a specific differential operator $L$ is to define the integral kernel $K(x, s)$ as the inverse of the eigenfunctions of $L$. 

The universal eigenfunctions of the constant-coefficient derivative $D = \frac{d}{dt}$ are $e^{st}$. Therefore, to map this operator to pure algebra, we construct a transform utilizing the inverse eigenfunction $K(t,s) = e^{-st}$. This procedure yields the \emph{Laplace Transform}. 

Applying the Laplace transform maps the operator $D$ to scalar multiplication by $s$, translating the differential equation $p(D)y = f(t)$ into $p(s)Y(s) = F(s)$. The formal solution is $Y(s) = \frac{F(s)}{p(s)}$.
The analytic inverse transform is executed via the algebraic algorithm of partial fraction decomposition, which serves as the algebraic analogue of decomposing $\ker p(D)$ into direct sums of exponential and generalized exponential eigenspaces.

\begin{Remark}[Operator Rationalization and Spectral Factorization]
By restricting the Laplace transform to the imaginary axis ($s = i\omega$), we isolate the system's response to pure oscillatory eigenfunctions, $e^{i\omega t}$. Under this mapping, the second derivative operator $D^2$ maps to the real scalar $(i\omega)^2 = -\omega^2$. Consequently, any differential operator belonging strictly to the subring $\mathbb{C}[D^2]$ maps to a purely real polynomial in $-\omega^2$. Physically, this means operators in $\mathbb{C}[D^2]$ scale amplitude but induce zero complex phase shift.

This algebraic property explains the classical ODE heuristic of ``multiplying by the conjugate'' to solve equations with trigonometric forcing, such as $p(D)y = \sin(\omega t)$. By applying the conjugate operator $p(-D)$ to both sides, the governing operator becomes $p(-D)p(D)$. Because all odd powers of $D$ cancel, this product is forced down into the real subring $\mathbb{C}[D^2]$. In the frequency domain, this structural projection evaluates to $p(i\omega)p(-i\omega) = |p(i\omega)|^2$. 

Thus, the algebraic act of rationalizing an operator to eliminate odd derivatives is equivalent to the continuous operation of \emph{spectral factorization}---stripping away the system's phase shift to isolate its pure squared-magnitude power spectrum.
\end{Remark}

This identical theoretical framework applies to the variable-coefficient Euler ring. The eigenfunctions of $\theta = x \frac{d}{dx}$ are the monomials $x^s$. To map Euler equations to algebraic rational functions, we formulate a transform over the domain $(0, \infty)$. Because the natural translation-invariant measure for this multiplicative domain is $\frac{dx}{x}$, integrating against the eigenfunction $x^s$ produces the kernel $K(x, s) = x^s \frac{1}{x} = x^{s-1}$. 

This constructed operator defines the standard \emph{Mellin Transform}:
\[
\mathcal{M}\{f\}(s) = \int_0^\infty f(x) x^{s-1} \,dx.
\]
Evaluated via integration by parts, this transform maps the Euler operator $xD_x$ to simple multiplication by $-s$. This demonstrates that the Laplace transform is not an ad hoc mechanism, but rather one instance of a broader algebraic principle: integral transforms are specific kernels custom-tailored to be the algebraic inverses of their respective differential rings.

\section{The Multivariable Generalization: PDEs and the Limits of the PID}

There exists a parallel between the theory of linear ordinary differential equations (ODEs) with constant coefficients and the theory of linear partial differential equations (PDEs) with constant coefficients. By expanding our algebraic viewpoint from $\mathbb{C}[D]$ to the multivariable ring $\mathbb{C}[D_x, D_y]$, we can push the operator-theoretic framework to its natural limits.

\subsection{Homogeneous Equations and Characteristic Varieties}

Consider the homogeneous ODE $p(D)y=0$, where $D=\frac{d}{dx}$ and $p$ is a polynomial.
The conventional approach seeks solutions of the form $y=e^{\lambda x}$.
Since $D e^{\lambda x}=\lambda e^{\lambda x}$, it follows that $p(D)e^{\lambda x}=p(\lambda)e^{\lambda x}$.
Therefore, $e^{\lambda x}$ is a solution if and only if $p(\lambda)=0$. This generates the familiar characteristic equation.

Now consider the homogeneous PDE $p(D_x,D_y)u=0$, where $D_x=\frac{\partial}{\partial x}$ and $D_y=\frac{\partial}{\partial y}$.
Seeking solutions of the form $u=e^{\lambda x+\mu y}$, we observe:
\[
D_x e^{\lambda x+\mu y}=\lambda e^{\lambda x+\mu y},
\qquad
D_y e^{\lambda x+\mu y}=\mu e^{\lambda x+\mu y}.
\]
Consequently, $p(D_x,D_y)e^{\lambda x+\mu y} = p(\lambda,\mu)e^{\lambda x+\mu y}$.
Hence, $e^{\lambda x+\mu y}$ constitutes a solution precisely when $p(\lambda,\mu)=0$.
Thus, the discrete characteristic equation of an ODE becomes a continuous \emph{characteristic variety} $p(\lambda,\mu)=0$ for a PDE.

\subsection{Canonical Examples in Mathematical Physics}

To demonstrate the scope of this algebraic framework, we observe that the foundational equations of mathematical physics are specific polynomials residing within the ring $\mathbb{C}[D_x, D_y, D_t]$. By translating these operators into our algebraic notation, their geometric characteristic varieties are exposed.

\begin{itemize}
\item \textbf{The Heat Equation:} The equation $u_t = \alpha u_{xx}$ maps to the operator $p(D_t, D_x) = D_t - \alpha D_x^2$. 
    Its characteristic variety is the parabola $\lambda - \alpha \mu^2 = 0$. The basis eigenfunctions are $e^{\alpha \mu^2 t + \mu x}$.
\item \textbf{The Wave Equation:} The equation $u_{tt} = c^2 u_{xx}$ maps to the operator $p(D_t, D_x) = D_t^2 - c^2 D_x^2$. 
    Its characteristic variety factors as $(\lambda - c\mu)(\lambda + c\mu) = 0$, representing two intersecting lines corresponding to forward and backward traveling waves.
\item \textbf{Laplace's Equation:} The two-dimensional equation $\Delta u = 0$ maps to $p(D_x, D_y) = D_x^2 + D_y^2$. 
    Its characteristic variety is $\lambda^2 + \mu^2 = 0$. Over the complex field, this factors into $\lambda = \pm i\mu$, forcing the eigenfunctions $e^{\pm i\mu x + \mu y}$ which linearly combine to form the classic harmonic functions.
\item \textbf{The Klein-Gordon Equation:} The equation $u_{tt} - c^2 u_{xx} + m^2 u = 0$ maps to the operator $p(D_t, D_x) = D_t^2 - c^2 D_x^2 + m^2$. 
    Its characteristic variety is the hyperbola $\lambda^2 - c^2 \mu^2 + m^2 = 0$, reflecting the dispersive nature of the associated quantum waves.
\end{itemize}

In each case, identifying the algebraic variety dictates the allowable parameters for the plane wave solutions $e^{\lambda t + \mu x}$, bypassing the need for early analytic trial-and-error.

\subsection{Factorization of the Differential Operator}

For ODEs, factorization of the characteristic polynomial leads directly to the solution structure.
For example, $(D-\lambda_1)(D-\lambda_2)y=0$ yields the general solution $y=c_1e^{\lambda_1 x}+c_2e^{\lambda_2 x}$.

Similarly, for PDEs such as $(D_x^2-D_y^2)u=0$, we factor the operator:
\[
D_x^2-D_y^2 = (D_x-D_y)(D_x+D_y).
\]
The resultant first-order equations $(D_x-D_y)u=0$ and $(D_x+D_y)u=0$ admit the solutions $u=F(x+y)$ and $u=G(x-y)$, respectively.
Therefore, the general solution is $u=F(x+y)+G(x-y)$.
Operator factorization clearly plays an analogous structural role in both theoretical frameworks.

\subsection{Non-Homogeneous Equations and Resonance}

The method of undetermined coefficients extends naturally to constant-coefficient PDEs.
Consider $p(D_x,D_y)u=f(x,y)$.
If $f(x,y)=e^{\lambda x+\mu y}$, we assert the ansatz $u_p=Ae^{\lambda x+\mu y}$.
Since $p(D_x,D_y)e^{\lambda x+\mu y} = p(\lambda,\mu)e^{\lambda x+\mu y}$, we obtain $A=\frac{1}{p(\lambda,\mu)}$ whenever $p(\lambda,\mu)\neq0$.

Furthermore, resonance behaves similarly. For ODEs, if $p(\lambda)=0$, the trial solution must be multiplied by a power of $x$. For example, $(D-1)y=e^x$ necessitates the ansatz $y_p=Axe^x$.
Likewise, for PDEs, if $p(\lambda,\mu)=0$, the naive ansatz fails. For example, $(D_x^2-D_y^2)u=e^{x+y}$ is resonant because $1^2-1^2=0$. A successful ansatz is obtained by multiplying $e^{x+y}$ by an appropriate linear polynomial in $x$ and $y$.

\subsection{Spectral Interpretation and the Algebraic Rupture}

The unifying principle underlying both theories is that exponentials are eigenfunctions of constant-coefficient differential operators.
Solving constant-coefficient ODEs and PDEs begins as fundamentally the identical algebraic process: differential operators map to polynomials, and exponentials serve as eigenfunctions. 

However, expanding the mathematical scope to $\mathbb{C}[D_x, D_y]$ transforms this comparison into an important lesson regarding the boundaries of algebra: \emph{the transition from one variable to multiple variables destroys the Principal Ideal Domain (PID) property.}

In our treatment of ODEs, every major theorem relied on the fact that $\mathbb{C}[D]$ is a PID. Upon transitioning to $\mathbb{C}[D_x, D_y]$, Hilbert's Basis Theorem guarantees the ring remains Noetherian, but it ceases to be a PID \cite{Coutinho}. This induces significant consequences for the algebraic frameworks utilized in ODEs:
\begin{itemize}
\item \textbf{Infinite-Dimensional Kernels:} In $\mathbb{C}[D]$, the kernel of the simplest operator $D$ is a finite-dimensional space spanned by a constant. In $\mathbb{C}[D_x, D_y]$, the kernel of $D_x$ expands to the infinite-dimensional space of all arbitrary smooth functions $f(y)$. The eigenspaces are no longer finitely generated by simple exponentials, explaining why the PDE solutions detailed in Section 8.3 incorporate arbitrary functions $F(x+y)$ rather than discrete constants.
\item \textbf{Non-Principal Ideals and Compatibility Conditions:} In $\mathbb{C}[D]$, every ideal is generated by a single polynomial. Consequently, the equation $Dy = f(x)$ locally admits a solution via direct integration. In contrast, consider the ideal generated by two operators in $\mathbb{C}[D_x, D_y]$, namely $I = \langle D_x, D_y \rangle$. Attempting to solve the system $D_x u = f(x,y)$ and $D_y u = g(x,y)$ reveals that a solution exists if and only if the forcing functions satisfy a compatibility condition governed by Clairaut's theorem: $D_y f = D_x g$. The emergence of compatibility conditions is the direct analytic consequence of working with a non-principal ideal.
\item \textbf{The Loss of Coprimality:} In Section 3, we leveraged Bézout's identity for coprime polynomials to split ODE solution spaces into direct sums. Because ideals in $\mathbb{C}[D_x, D_y]$ may be generated by multiple polynomials, coprime ideals do not unconditionally split the function space, causing the annihilator method to fragment.
\end{itemize}

\begin{Example}[Compatibility Conditions in PDEs]
To demonstrate the consequence of non-principal ideals explicitly, consider attempting to find a function $u(x,y)$ such that:
\[
D_x u = y^2 \quad \text{and} \quad D_y u = x.
\]
Integrating the first equation yields $u(x,y) = xy^2 + h(y)$. Differentiating this result with respect to $y$ requires $D_y u = 2xy + h'(y)$. Equating this to the second equation mandates $2xy + h'(y) = x$, which is impossible for any function $h$ depending strictly on $y$. The system has no solution because the required compatibility condition, $D_y(y^2) = D_x(x)$, fails ($2y \neq 1$). In the PID $\mathbb{C}[D]$, such contradictions cannot exist for single-variable integration.
\end{Example}

Analyzing this algebraic rupture provides a pedagogical bridge. PDE systems become more difficult not simply because the multivariable calculus is computationally heavier, but because the underlying ring structure has fragmented. Indeed, this represents the historical entry point into $D$-module theory: because $\mathbb{C}[D_x, D_y]$ is not a PID, mathematicians were required to invent $D$-modules and sheaf theory to manage the complex compatibility conditions of multivariable operator ideals \cite{Coutinho}.

\section{Conclusion}

By analyzing linear differential equations through the lens of polynomial rings and module theory, classical methods are seen as more than a collection of disconnected computational heuristics. The annihilator method emerges naturally from quotient modules, variable-coefficient substitutions are mathematically verified as ring isomorphisms, reduction of order is revealed as non-commutative operator factorization, and integral transforms are recognized as custom-built algebraic inverses constructed from eigenfunctions. Furthermore, extending our view to the non-linear equations of Bernoulli and Riccati illustrates that when linear operator theory fails, diffeomorphic mapping can often recover it. Finally, contrasting ODEs with the multivariable ring of PDEs illustrates both the operational power and the structural limitations of this algebraic framework. This perspective unifies the underlying mathematics and elevates its pedagogical presentation.

Beyond the scope of continuous, constant-coefficient equations, these structural principles are universal. For instance, the framework developed here applies identically to discrete linear recurrence relations (difference equations) by replacing the derivative $D$ with the forward shift operator $E$. Because the resulting polynomial ring $\mathbb{C}[E]$ is also a principal ideal domain, the theories of characteristic roots, polynomial multipliers for resonance, and the annihilator method map over isomorphically \cite{KelleyPeterson}. Moreover, the algebraic reduction of the Euler equation serves as the foundational model for analyzing regular singular points via the Frobenius method, formally necessitating the use of the non-commutative Weyl algebra $\mathbb{C}[x, D]$ \cite{Coutinho}. Ultimately, recognizing these algebraic symmetries allows differential equations to be taught not merely as a collection of analytical techniques, but as a connected structural discipline.

\end{document}